\newtheorem{theorem}{Theorem}[section]
\newtheorem{lemma}[theorem]{Lemma}
\theoremstyle{definition}
\newtheorem{example}[theorem]{Example}
\theoremstyle{remark}
\newtheorem{remark}[theorem]{Remark}
\numberwithin{equation}{section}
\begin{document}

\begin{frontmatter}




\title{The number of solutions of diagonal cubic equations over finite fields}

\author[]{Wenxu Ge\corref{cor1}}
\ead{gewenxu@ncwu.edu.cn}
\cortext[cor1]{Corresponding author}

\address{School of Mathematics and Statistics, North China University of Water Resources and Electric Power,
              Zhengzhou 450046, P.R.China}

\author[]{Weiping Li}
\ead{wpliyh@163.com}
\address{School of Mathematics and Information Sciences, Henan University of Economics and Law,\\
Zhengzhou, 450046, P.R.China}

\author[]{Tianze Wang}
\ead{wtz@ncwu.edu.cn}
\address{School of Mathematics and Statistics, North China University of Water Resources and Electric Power, \\
Zhengzhou, 450046, P.R.China}

\begin{abstract}
Let $\mathbb{F}_q$ be a finite field of $q=p^k$ elements. For any $z\in \mathbb{F}_q$, let $A_n(z)$ and $B_n(z)$ denote the number of solutions of the equations
$x_1^3+x_2^3+\cdots+x_n^3=z$ and $x_1^3+x_2^3+\cdots+x_n^3+zx_{n+1}^3=0$ respectively. Recently, using the generator of $\mathbb{F}^{\ast}_q$, Hong and Zhu gave the generating functions $\sum_{n=1}^{\infty}A_n(z)x^n$ and $\sum_{n=1}^{\infty}B_n(z)x^n$. In this paper, we give the generating functions $\sum_{n=1}^{\infty}A_n(z)x^n$ and $\sum_{n=1}^{\infty}B_n(z)x^n$ immediately by the coefficient $z$. Moreover, we gave the formulas of the number of solutions of equation $a_1x_1^3+a_2x_2^3+a_3x_3^3=0$ and our formulas are immediately determined by the coefficients $a_1,a_2$ and $a_3$. These extend and improve earlier results.
\end{abstract}

\begin{keyword}
{Gauss sum \sep Jacobi sum \sep generating function \sep diagonal cubic equation \sep exponential sum}

\MSC  11T23\sep11T24
\end{keyword}

\end{frontmatter}


\section{Introduction}

Let $\mathbb{F}_q$ be a finite field of $q=p^k$ elements. Let $\mathbb{F}^{\ast}_q$ be the multiplicative group of $\mathbb{F}_q$, i.s.
$\mathbb{F}^{\ast}_q=\mathbb{F}_q\setminus\{0\}$. Counting the number of solutions $(x_1,x_2,\cdots,x_n)\in \mathbb{F}^n_q$ of the general diagonal equation
$$a_1x_1^{d_1}+a_2x_2^{d_2}+\cdots+a_nx_n^{d_n}=b$$
over $\mathbb{F}_q$ is an important and fundamental problem in number theory and finite field. The special case where all the $d_i$ are equal has extensively been studied by many authors (see, for example, \cite{J,Mor,M1,Wan,Weil,W1}).

For any $z\in \mathbb{F}_q$, one lets $A_n(z)$ denote the number of solutions of the following diagonal equation
$$
x_1^3+x_2^3+\cdots+x_n^3=z
$$
over $\mathbb{F}_q$. When $q=p\equiv 1(\bmod 3)$, Chowla, Cowles and Cowles \cite{CCC1} gave the generating function $\sum_{n=0}^{\infty} A_n(0)x^n$.
Myerson \cite{M1} extended the Chowla, Cowles and Cowles's result to finite field $\mathbb{F}_q$. He proved the following result.

\begin{theorem}[\cite{M1}]
Let $\mathbb{F}_q$ be a finite field of $q=p^k$ elements with $q\equiv 1(\bmod 3)$. Then
\begin{align*}
\sum_{n=1}^{\infty}A_n(0)x^n=\frac{x}{1-qx}+\frac{(q-1)(2+cx)x^2}{1-3qx^2-qcx^3},
\end{align*}
where $c$ is uniquely determined by
\begin{align}\label{c}
4q=c^2+27d^2, c\equiv 1 (\bmod 3)\  \mathrm{and}\  \mathrm{if}\  p\equiv 1 (\bmod 3),\  \mathrm{then}\  (c,p)=1.
\end{align}
\end{theorem}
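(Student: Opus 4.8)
The plan is to pass to additive characters, express $A_n(0)$ as a power sum of three algebraic integers, and then read off the generating function. Fix a nontrivial additive character $\psi$ of $\mathbb{F}_q$ and set $\omega=e^{2\pi i/3}$. Orthogonality gives
$$A_n(0)=\frac1q\sum_{t\in\mathbb{F}_q}\Bigl(\sum_{x\in\mathbb{F}_q}\psi(tx^3)\Bigr)^n=q^{n-1}+\frac1q\sum_{t\in\mathbb{F}_q^{\ast}}S(t)^n,\qquad S(t):=\sum_{x\in\mathbb{F}_q}\psi(tx^3),$$
the contribution of $t=0$ being $q^{n-1}$. Because $q\equiv1\pmod 3$, the cube map is three-to-one on $\mathbb{F}_q^{\ast}$; letting $\chi$ be a multiplicative character of order $3$ and using $\#\{x:x^3=u\}=1+\chi(u)+\overline{\chi}(u)$, one obtains for $t\neq0$ that $S(t)=\overline{\chi}(t)g(\chi)+\chi(t)g(\overline{\chi})$, where $g(\chi)=\sum_x\chi(x)\psi(x)$ is the cubic Gauss sum. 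Since $\chi(t)$ takes each value in $\{1,\omega,\omega^2\}$ exactly $(q-1)/3$ times, $\sum_{t\neq0}S(t)^n=\tfrac{q-1}{3}\,p_n$ with $p_n:=S_0^n+S_1^n+S_2^n$, where $S_j:=\omega^{-j}g(\chi)+\omega^{j}g(\overline{\chi})$ for $j=0,1,2$.

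The crux is to identify the polynomial with roots $S_0,S_1,S_2$. Put $u=g(\chi)$, $v=g(\overline{\chi})$. A direct computation with cube roots of unity gives $S_0+S_1+S_2=0$, $\;S_0S_1+S_0S_2+S_1S_2=-3uv$, and $S_0S_1S_2=u^3+v^3$. Now I would invoke the classical Gauss-sum identities: $uv=g(\chi)g(\overline{\chi})=\chi(-1)q=q$, and $g(\chi)^2=J(\chi,\chi)\,g(\overline{\chi})$ with the Jacobi sum $J(\chi,\chi)$, whence $g(\chi)^3=qJ(\chi,\chi)$ and $g(\overline{\chi})^3=q\,\overline{J(\chi,\chi)}$. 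As $|J(\chi,\chi)|^2=q$ and $J(\chi,\chi)\equiv-1\pmod 3$ in $\mathbb{Z}[\omega]$, one may write $J(\chi,\chi)=\tfrac12\bigl(c+3d\sqrt{-3}\bigr)$ with $4q=c^2+27d^2$ and $c\equiv1\pmod 3$, so that $u^3+v^3=q\bigl(J(\chi,\chi)+\overline{J(\chi,\chi)}\bigr)=qc$. Hence $S_0,S_1,S_2$ are precisely the roots of
$$T^3-3qT-qc=0.$$
I expect the main obstacle to lie exactly here: verifying that the $c$ produced by the Jacobi sum is the one uniquely determined by \eqref{c}. For $q=p$ this is the classical uniqueness of the representation $4p=c^2+27d^2$ together with the sign normalization $c\equiv1\pmod 3$; for $q=p^k$ with $k>1$ it requires the Hasse--Davenport relation to express the Gauss sum over $\mathbb{F}_q$ in terms of the one over the prime field, and the side condition $(c,p)=1$ in \eqref{c} is what selects the correct root.

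Finally I would convert the power sums into a generating function. With $e_1=0$, $e_2=-3q$, $e_3=qc$, set $h(x):=\prod_{j=0}^{2}(1-S_jx)=1-3qx^2-qcx^3$; the logarithmic-derivative identity $\sum_{n=1}^{\infty}p_nx^n=-x\,h'(x)/h(x)$ (equivalently, Newton's identities) gives
$$\sum_{n=1}^{\infty}p_nx^n=\frac{6qx^2+3qcx^3}{1-3qx^2-qcx^3}=\frac{3qx^2(2+cx)}{1-3qx^2-qcx^3}.$$
Combining $A_n(0)=q^{n-1}+\tfrac{q-1}{3q}p_n$ with $\sum_{n=1}^{\infty}q^{n-1}x^n=\dfrac{x}{1-qx}$ yields
$$\sum_{n=1}^{\infty}A_n(0)x^n=\frac{x}{1-qx}+\frac{q-1}{3q}\cdot\frac{3qx^2(2+cx)}{1-3qx^2-qcx^3}=\frac{x}{1-qx}+\frac{(q-1)(2+cx)x^2}{1-3qx^2-qcx^3},$$
which is the asserted formula. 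Everything past the Jacobi-sum normalization is routine manipulation of character sums, Newton's identities, and rational functions, so I anticipate no further difficulty there.
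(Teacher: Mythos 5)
Your proposal is correct, and it is essentially the same argument the paper itself relies on: the paper quotes this theorem from Myerson without proof, but its own machinery for the analogous results (Lemma 2.9 for $S(a)=\overline{\chi}(a)G(\chi)+\chi(a)G(\overline{\chi})$, Lemma 2.6 identifying $S(1),S(z),S(z^2)$ as roots of $x^3-3qx-qc=0$, and the rational-function manipulation in the proof of Theorem 1.5) is exactly the route you take. The one step you leave semi-sketched --- that the $c$ produced by $G^3(\chi)=qJ(\chi,\chi)$ is the $c$ uniquely pinned down by \eqref{c}, including the condition $(c,p)=1$ when $p\equiv 1\ (\mathrm{mod}\ 3)$ and the pure-Gauss-sum case $p\equiv 2\ (\mathrm{mod}\ 3)$ --- is precisely the content of the paper's Lemma 2.6 (from Myerson) and Lemma 2.8, so deferring it to those classical facts is legitimate and your remaining computations (elementary symmetric functions $e_1=0$, $e_2=-3q$, $e_3=qc$ and the logarithmic-derivative identity) check out.
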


Recently, Hong and Zhu \cite{HZ} consider $A_n(z)$ in finite field $\mathbb{F}_q$, they proved the following result.

\begin{theorem}[\cite{HZ}]
Let $z\in \mathbb{F}^{\ast}_q=\langle g\rangle$ and $q=p^k\equiv 1(\bmod 3)$ with $k$ being a positive integer. Then
$$
\sum_{s=1}^{\infty}A_s(z)x^s=\frac{x}{1-qx}+\frac{2x+(c-2)x^2-cx^3}{1-3qx^2-qcx^3}
$$
if $z$ is cubic, where $c$ is uniquely determined by (\ref{c}), and
$$
\sum_{s=1}^{\infty}A_s(z)x^s=\frac{x}{1-qx}-\frac{x+\frac{1}{2}(4+c+9d\delta_z(d))x^2+cx^3}{1-3qx^2-qcx^3}
$$
if $z$ is non-cubic, where $c$ and $d$ are uniquely determined by (\ref{c}) with $d>0$ and
\begin{align}\label{q}
\delta_z(q)=\left\{
                \begin{array}{ll}
                (-1)^{\langle ind_g(d)\rangle_3}\cdot sgn\left(\mathrm{Im}(r_1+3\sqrt{3}r_2\mathrm{i})^k\right), & \hbox{if $k\equiv 1(\bmod 2)$;} \\
                 0, & \hbox{if $k\equiv 0(\bmod 2)$.}
                \end{array}
              \right.
\end{align}
where $r_1$ and $r_2$ are uniquely determined by
$$4p=r_1^2+27r_2^2,\ \ r_1\equiv 1(\mathrm{mod}3),\ \ 9r_2\equiv (2\mathrm{N}_{\mathbb{F}_q/\mathbb{F}_p}(g)^{\frac{p-1}{3}}+1)r_1 (\bmod p).$$
\end{theorem}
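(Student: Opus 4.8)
The plan is to run the classical Gauss-sum/generating-function argument and then carry out the arithmetic that converts the resulting Gauss sums into the invariants $c$, $d$ and the sign $\delta_z(q)$. First I would pass to exponential sums: let $\psi$ be a nontrivial additive character of $\mathbb{F}_q$; detecting $x_1^3+\cdots+x_n^3=z$ by $\frac1q\sum_{t\in\mathbb{F}_q}\psi\big(t(x_1^3+\cdots+x_n^3-z)\big)$ and isolating $t=0$ gives
\[
A_n(z)=q^{n-1}+\frac1q\sum_{t\in\mathbb{F}_q^{\ast}}\psi(-tz)\Big(\sum_{x\in\mathbb{F}_q}\psi(tx^3)\Big)^{n}.
\]
Fixing a multiplicative character $\chi$ of order $3$ and using $\#\{x:x^3=y\}=\sum_{j=0}^{2}\chi^{j}(y)$ together with the Gauss sum $G(\lambda)=\sum_{y}\lambda(y)\psi(y)$, one gets $\sum_{x}\psi(tx^3)=G(\chi)\overline{\chi}(t)+G(\overline{\chi})\chi(t)$ for $t\neq0$; since $\chi(t)^{3}=1$, this takes only the three values $\lambda_j=\omega^{-j}G(\chi)+\omega^{j}G(\overline{\chi})$ ($j=0,1,2$, $\omega=e^{2\pi\mathrm{i}/3}$), each attained for exactly $(q-1)/3$ values of $t$. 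The identities $\chi(-1)=1$, $G(\chi)G(\overline{\chi})=q$, $G(\chi)^{3}=qJ(\chi,\chi)$ and $J(\chi,\chi)\equiv-1\pmod 3$ — which are exactly what make $c$ with $4q=c^{2}+27d^{2}$ appear — then yield $\sum_j\lambda_j=0$, $\sum_{i<j}\lambda_i\lambda_j=-3q$, $\lambda_0\lambda_1\lambda_2=G(\chi)^{3}+G(\overline{\chi})^{3}=qc$, so that $\lambda_0,\lambda_1,\lambda_2$ are the roots of $X^{3}-3qX-qc$, whose reciprocal polynomial is the common denominator $1-3qx^{2}-qcx^{3}$.

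Next I would assemble the generating function. Grouping the $t$-sum according to the value of $\chi(t)$ turns $\sum_{t\neq0}\psi(-tz)(\cdots)^{n}$ into $\sum_{j=0}^{2}\lambda_j^{n}T_j$ with $T_j=\sum_{\chi(t)=\omega^{j}}\psi(-tz)$, hence
\[
\sum_{n\ge1}A_n(z)x^{n}=\frac{x}{1-qx}+\frac1q\sum_{j=0}^{2}T_j\,\frac{\lambda_j x}{1-\lambda_j x}.
\]
By orthogonality $T_j=\frac13\big(-1+\omega^{-j}\overline{\chi}(z)G(\chi)+\omega^{-2j}\chi(z)G(\overline{\chi})\big)$, so $T_j=\frac13(\lambda_j-1)$ when $z$ is a cube (then $\chi(z)=1$), and $T_j=\frac13(\lambda_{j+m}-1)$ when $z$ is a non-cube with $\chi(z)=\omega^{m}$, $m\in\{1,2\}$ (indices mod $3$). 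One then collapses every $\sum_j f(\lambda_j)(1-\lambda_j x)^{-1}$ to a rational function over $1-3qx^{2}-qcx^{3}$ by means of $\prod_{i\ne j}(1-\lambda_i x)=1+\lambda_j x+\frac{qc}{\lambda_j}x^{2}$; the only sums needed are $\sum_j(1-\lambda_j x)^{-1}$, $\sum_j\lambda_j(1-\lambda_j x)^{-1}$, $\sum_j\lambda_j^{2}(1-\lambda_j x)^{-1}$ and, for the non-cube case, $\sum_j\omega^{\pm j}(1-\lambda_j x)^{-1}$, each of which is explicit. For $z$ a cube this gives $\frac{x}{1-qx}+\frac{2x+(c-2)x^{2}-cx^{3}}{1-3qx^{2}-qcx^{3}}$ after a one-line simplification, which is the first asserted formula (and the same machinery with $T_j=(q-1)/3$ recovers Myerson's $z=0$ formula as a check).

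For a non-cube $z$ the same computation produces an answer of the shape $\frac{x}{1-qx}-\frac{x+\frac12(4+c+E_z)x^{2}+cx^{3}}{1-3qx^{2}-qcx^{3}}$, the only new ingredient being $\sum_j\lambda_j\lambda_{j+m}(1-\lambda_j x)^{-1}$. Splitting this into its parts symmetric and antisymmetric under $m\leftrightarrow-m$: the symmetric part equals $-\sum_j\lambda_j^{2}(1-\lambda_j x)^{-1}$, already evaluated, while the antisymmetric part is a constant multiple of $(\omega-\omega^{2})\big(G(\chi)^{3}-G(\overline{\chi})^{3}\big)x^{2}(1-3qx^{2}-qcx^{3})^{-1}$. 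Since $G(\chi)^{3}-G(\overline{\chi})^{3}=q\big(J(\chi,\chi)-\overline{J(\chi,\chi)}\big)$ and, writing $J(\chi,\chi)=a+b\omega$, one has $a\equiv-1\pmod 3$, $3\mid b$, $c=2a-b$ and $|b|=3d$, the antisymmetric part equals $9d$ up to sign; collecting terms gives $E_z=9d\,\delta_z(q)$ with $\delta_z(q)\in\{\pm1\}$ determined by which of the two non-cube classes contains $z$ — i.e.\ $\mathrm{ind}_g(z)\bmod 3$ after normalising $\chi$ by $\chi(g)=\omega$ — and by $\mathrm{sgn}\,\mathrm{Im}\,J(\chi,\chi)$. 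To get the explicit $\mathbb{F}_p$-description one invokes the Hasse--Davenport relation $G_{\mathbb{F}_q}(\chi)=(-1)^{k-1}G_{\mathbb{F}_p}(\chi_0)^{k}$ (with $\chi_0$ a cubic character of $\mathbb{F}_p$), which rewrites $\mathrm{Im}\,J(\chi,\chi)$ through $\mathrm{Im}(r_1+3\sqrt{3}r_2\mathrm{i})^{k}$ for the $\mathbb{F}_p$-invariants $r_1,r_2$ and controls the behaviour according to the parity of $k$, yielding (\ref{q}).

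Everything up to and including the cube case is formal once the Gauss-sum evaluation and the three symmetric-function relations are in place. The genuine obstacle is the last step: fixing the normalisation of $\chi$ relative to the prescribed generator $g$, carrying the overall sign correctly through the antisymmetric piece, and executing the Hasse--Davenport descent so that the sign comes out as \emph{literally} $\delta_z(q)$ in (\ref{q}) — the even-$k$ book-keeping in particular — is where the real work lies.
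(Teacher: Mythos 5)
Your outline reproduces the standard machinery correctly as far as it goes: the detection formula $A_n(z)=q^{n-1}+\frac1q\sum_{t\neq 0}\psi(-tz)S(t)^n$, the evaluation $S(t)=\overline{\chi}(t)G(\chi)+\chi(t)G(\overline{\chi})$, the symmetric functions $e_1=0$, $e_2=-3q$, $e_3=G^3(\chi)+G^3(\overline{\chi})=qc$ giving the denominator $1-3qx^2-qcx^3$, the identification $T_j=\frac13(\lambda_{j+m}-1)$, and the cubic case (which does collapse to $\frac{2x+(c-2)x^2-cx^3}{1-3qx^2-qcx^3}$). But everything you actually carry out is essentially already in Myerson's Theorem 1.1; the entire content of the quoted theorem beyond that is the \emph{sign} attached to $9d$, i.e.\ the explicit formula (\ref{q}) for $\delta_z(q)$ in terms of $\mathrm{ind}_g(z)\bmod 3$, the $\mathbb{F}_p$-invariants $r_1,r_2$, and the parity of $k$ — and this is exactly the step your proposal defers (``where the real work lies'') without giving any argument. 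Nothing in the sketch identifies $\mathrm{sgn}\,\mathrm{Im}\,J(\chi,\chi)$ with $\mathrm{sgn}\,\mathrm{Im}(r_1+3\sqrt3 r_2\mathrm{i})^k$ after the Hasse--Davenport descent (where the factor $(-1)^{k-1}$ must be tracked), nothing shows the coset of $z$ enters precisely through $(-1)^{\langle \mathrm{ind}_g(z)\rangle_3}$ once $\chi$ is normalised by $\chi(g)=\omega$, and nothing addresses the even-$k$ case at all. That last point is not bookkeeping: for $p\equiv1\pmod 3$ one always has $d\neq0$ (e.g.\ $q=49$, $c=13$, $d=\pm1$, exactly the situation of Example 4.1 where the count genuinely depends on the sign of $d$), so the even-$k$ clause cannot be waved through, and to finish you would in any case have to work from Hong--Zhu's actual formulation (the version transcribed here contains misprints such as $\delta_z(d)$ and $\mathrm{ind}_g(d)$). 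A further small slip: the $m\leftrightarrow-m$ symmetric part of $\lambda_j\lambda_{j+m}$ is $-\tfrac12\lambda_j^2$, not $-\lambda_j^2$; harmless in itself, but it signals that the sign-sensitive computation has not been checked.

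For context, this statement is quoted from \cite{HZ}; the present paper never reproves it, and for its own analogues (Theorems 1.4 and 1.5) it takes a different route that avoids $\delta_z(q)$ entirely: the sign of $d$ is pinned down by the congruence $9d\equiv c\bigl(2z^{\frac{q-1}{3}}+1\bigr)\pmod p$ through an explicit Jacobi-sum evaluation over $\mathbb{F}_q$ (Lemma 2.8, resting on Theorem 3.1.3 of \cite{BEW}, Hasse--Davenport, and the splitting of $p$ in $\mathbb{Q}(\omega)$), after which $B_n(z)$ is computed and $A_n(z)$ is recovered from $B_n(z)=A_n(0)+(q-1)A_n(z)$ together with Myerson's Theorem 1.1, rather than by your direct partial-fraction computation. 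If you wish to complete your route to the theorem as stated, the missing ingredient is precisely a sign lemma of the strength of Lemma 2.8 (or of Hong--Zhu's $\delta_z(q)$ analysis); supplying it is the proof, not an afterthought.
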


Suppose that $z\in \mathbb{F}^{\ast}_q$ be non-cubic.
Let $B_n(z)$ be the number of solutions of diagonal cubic equation
\begin{align*}
x_1^3+x_2^3+\cdots+x_n^3+zx^3_{n+1}=0
\end{align*}
over $\mathbb{F}_q$.
In \cite{HZ}, Hong and Zhu also consider $B_n(z)$. They showed the following result.
\begin{theorem}[\cite{HZ}]
Let $z\in \mathbb{F}^{\ast}_q$ be non-cubic and $q=p^k\equiv 1(\bmod 3)$ with $k$ being a positive integer. Then
$$
\sum_{s=1}^{\infty}B_s(z)x^s=\frac{qx}{1-qx}-\frac{(q-1)x+\frac{1}{2}(q-1)(c-9d)x^2}{1-3qx^2-qcx^3},
$$
where $c$ and $d$ are uniquely determined by (\ref{c}) with $d>0$ and $\delta_z(q)$ is given as in (\ref{q}).
\end{theorem}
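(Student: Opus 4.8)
The plan is to reduce $B_n(z)$ to $A_n(0)$ and $A_n(z)$, whose generating functions are already available — Myerson's theorem for $A_n(0)$ and the non-cubic formula for $A_n(z)$ quoted above — and then combine the two.

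The first step is the identity
\[
B_n(z)=A_n(0)+(q-1)\,A_n(z)\qquad(n\ge 1).
\]
Fixing the value $y$ of $x_{n+1}$ in the defining equation gives $B_n(z)=\sum_{y\in\mathbb{F}_q}A_n(-zy^3)$; the term $y=0$ contributes $A_n(0)$. For $y\ne 0$, since $-1=(-1)^3$ is a cube in $\mathbb{F}_q$ one has $-zy^3=z(-y)^3$, so $-zy^3$ runs over $\{zu^{3}:u\in\mathbb{F}_q^{\ast}\}$ as $y$ runs over $\mathbb{F}_q^{\ast}$; replacing each $x_i$ by $u^{-1}x_i$ shows $A_n(zu^{3})=A_n(z)$, i.e. $A_n$ is constant on the non-cubic coset $z(\mathbb{F}_q^{\ast})^{3}$. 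Hence $\sum_{y\ne 0}A_n(-zy^3)=(q-1)A_n(z)$, which gives the identity.

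The second step is to multiply by $x^n$, sum over $n\ge 1$, and substitute. The two occurrences of $\tfrac{x}{1-qx}$ collapse to $\tfrac{qx}{1-qx}$. Over the common denominator $1-3qx^{2}-qcx^{3}$, the numerator becomes $(q-1)\big[(2+cx)x^{2}-x-\tfrac12(4+c+9d\,\delta_z(q))x^{2}-cx^{3}\big]$; the $cx^{3}$ terms cancel, the $2x^{2}$ cancels against $-\tfrac12\cdot 4\,x^{2}$, and what survives is $-(q-1)x-\tfrac12(q-1)\big(c+9d\,\delta_z(q)\big)x^{2}$, which is exactly the asserted numerator (the sign of the $9d$-term being fixed by the normalisation $d>0$ in~(\ref{c}) and the convention~(\ref{q}); for even $k$ it disappears since $\delta_z(q)=0$).

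The only delicate point on this route is the sign bookkeeping: the factor $\delta_z(q)$ — which, for odd $k$, records on which of the two non-cubic cosets $z$ lies — is carried over verbatim from the non-cubic $A_n(z)$ formula, so one must use one and the same choice of $c,d$ in~(\ref{c}) in all three generating functions. For a self-contained argument one would instead write $B_n(z)=\tfrac1q\sum_{t\in\mathbb{F}_q}\big(\sum_{x}\psi(tx^{3})\big)^{n}\big(\sum_{y}\psi(tzy^{3})\big)$ with $\psi$ the canonical additive character, expand $\sum_{x}\psi(tx^{3})=\overline{\chi}(t)G(\chi)+\chi(t)G(\overline{\chi})$ for $t\ne 0$ and a fixed cubic character $\chi$, and apply $\sum_{t\ne 0}\chi(t)^{a}=q-1$ or $0$ according as $3\mid a$ or not; the surviving terms are controlled by $G(\chi)G(\overline{\chi})=q$ and $G(\chi)^{3}+G(\overline{\chi})^{3}=qc$, which is precisely what produces the denominator $1-3qx^{2}-qcx^{3}$ and the constant $c$ of~(\ref{c}). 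In that approach the real work — and the expected main obstacle — is the explicit evaluation of $\mathrm{sgn}\,\mathrm{Im}\,G(\chi)^{k}$, i.e. the quantity $\delta_z(q)$ of~(\ref{q}).
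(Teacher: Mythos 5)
Your derivation is correct in substance, but it takes a genuinely different route from the paper. You reduce everything to the two generating functions already quoted: the identity $B_n(z)=A_n(0)+(q-1)A_n(z)$ (your proof of it, fixing $x_{n+1}=y$, using $-zy^3=z(-y)^3$ and the scaling $x_i\mapsto u^{-1}x_i$, is fine), followed by combining Myerson's series for $\sum_n A_n(0)x^n$ with the Hong--Zhu series for $\sum_n A_n(z)x^n$; your bookkeeping of the numerators is also right and produces $\frac{qx}{1-qx}-\frac{(q-1)x+\frac12(q-1)\left(c+9d\,\delta_z(q)\right)x^2}{1-3qx^2-qcx^3}$. The paper, by contrast, proves the equivalent statement (its Theorem 1.5) self-containedly: orthogonality gives $B_n(z)=q^n+\frac1q\sum_{a\neq0}S(a)^nS(az)$, the geometric series plus the coset decomposition $\mathbb{F}_q^{\ast}=H\cup zH\cup z^2H$ reduce the sum to the elementary symmetric expressions in $S(1),S(z),S(z^2)$, and these are evaluated by Myerson's cubic $x^3-3qx-qc=0$ (Lemma 2.6) together with the key identity $S(1)^2S(z)+S(z)^2S(z^2)+S(z^2)^2S(1)=\frac32q(9d-c)$ (Lemma 2.10), whose sign determination is exactly the Jacobi-sum argument of Lemma 2.8. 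Your closing sketch of a ``self-contained'' character-sum argument is in fact this very route, and you correctly identify that its whole difficulty is the sign (the quantity $\delta_z(q)$, respectively the congruence in (\ref{cd})). What your route buys is brevity; what it costs is independence: both this paper and \cite{HZ} use the same identity in the opposite direction (here Theorem 1.4 is deduced from Theorem 1.5), so deriving the $B$-series from the $A$-series imports the hard content of Theorem 1.2 wholesale rather than establishing it.

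One caveat you should state more carefully: the expression you obtain, $c+9d\,\delta_z(q)$ with $d>0$, is the internally consistent one (it matches Theorem 1.5 once $d$ is renormalized by the congruence in (\ref{cd})), but it does not literally coincide with the displayed $c-9d$, $d>0$, of the statement; the statement as printed invokes $\delta_z(q)$ without it appearing in the formula, so the match is up to that transcription, not ``exact'' as you claim. Saying explicitly that $-d$ in the printed numerator must be read as $d\,\delta_z(q)$ (equivalently, as the $d$ normalized by (\ref{cd})) would close this gap in the sign bookkeeping, which is, after all, the very point at issue in this circle of results.
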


Indeed, The key of these problems is to determine the sign of $d$. In Hong and Zhu's results, they use the generator of group $\mathbb{F}^{\ast}_q$ to determine the sign of $d$. However, for a large prime $p$, it is not easy to find a generator of group $\mathbb{F}^{\ast}_q$. In this paper, by calculating the Jacobi sum of finite field, we
determine the sign of $d$ immediately by the coefficient $z$. We give the following two results.

\begin{theorem}
Let $\mathbb{F}_q$ be a finite field of $q=p^k$ elements with $q\equiv 1(\bmod 3)$. Then
$$
\sum_{n=1}^{\infty}A_n(z)x^n=\frac{x}{1-qx}+\frac{2x+(c-2)x^2-cx^3}{1-3qx^2-qcx^3}
$$
if $z$ is cubic, and
$$
\sum_{n=1}^{\infty}A_n(z)x^n=\frac{x}{1-qx}-\frac{x+\frac{1}{2}(4+c-9d)x^2+cx^3}{1-3qx^2-qcx^3}
$$
if $z$ is non-cubic, where $c$ and $d$ are uniquely determined by
\begin{align}\label{cd}
4q=c^2+27d^2, c\equiv 1 (\bmod 3),  (c,p)=1, 9d\equiv c(2z^{\frac{q-1}{3}}+1)(\bmod p).
\end{align}
\end{theorem}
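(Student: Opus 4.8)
The plan is to count $A_n(z)$ with additive characters, expand everything in Gauss sums, repackage the answer through a cube--roots--of--unity filter as a third--order linear recurrence whose companion polynomial is $1-3qx^2-qcx^3$, and read off the numerators; the one genuinely new ingredient is a congruence modulo a prime of $\mathbb{Z}[\omega]$ above $p$ that pins the sign of $d$. Fix $\omega=e^{2\pi i/3}$, a prime $\mathfrak p$ of $\mathbb Z[\omega]$ over $p$, and normalize the cubic character $\chi$ of $\mathbb F_q^\ast$ by $\chi(t)\equiv t^{(q-1)/3}\ (\bmod\ \mathfrak p)$, so in particular $z^{(q-1)/3}\equiv\chi(z)\ (\bmod\ \mathfrak p)$; put $\bar\chi=\chi^2$. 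Let $\psi$ be a nontrivial additive character and $g(\chi)=\sum_{t\neq0}\chi(t)\psi(t)$; then $g(\chi)g(\bar\chi)=q$, $g(\chi)^3=qJ$, $g(\bar\chi)^3=q\bar J$, where $J=J(\chi,\chi)=\sum_t\chi(t)\chi(1-t)$ and $J\bar J=q$. Since the cubic Jacobi sum is primary, $J+\bar J=c$ and $J-\bar J=3d_0\sqrt{-3}$ for some $c\equiv1\ (\bmod\ 3)$ and $d_0\in\mathbb Z$, and $4q=(J+\bar J)^2-(J-\bar J)^2=c^2+27d_0^2$, with $(c,p)=1$ when $p\equiv1\ (\bmod\ 3)$. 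Finally the numbers $\sigma_\ell=g(\chi)\omega^\ell+g(\bar\chi)\omega^{-\ell}$ ($\ell=0,1,2$) satisfy $\sigma_\ell^3-3q\sigma_\ell-qc=0$ (from $g(\chi)g(\bar\chi)=q$ and $g(\chi)^3+g(\bar\chi)^3=qc$), so they are the roots of $T^3-3qT-qc$ and $\prod_\ell(1-\sigma_\ell x)=1-3qx^2-qcx^3$.

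From $\#\{x\in\mathbb F_q:x^3=y\}=1+\chi(y)+\bar\chi(y)$ and orthogonality, $A_n(z)=\frac1q\sum_t\psi(-tz)\bigl(\sum_x\psi(tx^3)\bigr)^n$, with $\sum_x\psi(tx^3)=\bar\chi(t)g(\chi)+\chi(t)g(\bar\chi)$ for $t\neq0$. Expanding binomially, using that $\sum_{t\neq0}\chi(t)^m\psi(-tz)$ equals $-1$, $\chi(z)^2g(\chi)$, $\chi(z)g(\bar\chi)$ for $m\equiv0,1,2\ (\bmod\ 3)$, and applying the filter, one obtains — writing $\chi(z)=\omega^a$, so $a=0$ if $z$ is a nonzero cube and $a\in\{1,2\}$ otherwise, the shift coming from $\chi(z)^2g(\chi)\omega^\ell+\chi(z)g(\bar\chi)\omega^{2\ell}=\sigma_{\ell+2a}$ —
\[
A_n(z)=q^{\,n-1}+\frac1{3q}\sum_{\ell=0}^{2}\sigma_\ell^{\,n}(\sigma_{\ell+2a}-1),
\]
hence $\sum_{n\ge1}A_n(z)x^n=\frac{x}{1-qx}+\frac{x\,N(x)}{3q\,(1-3qx^2-qcx^3)}$ with $N(x)=\sum_\ell(\sigma_{\ell+2a}-1)\sigma_\ell\prod_{\ell'\neq\ell}(1-\sigma_{\ell'}x)$ (the $z=0$ case, which is outside the present scope, is handled by the same scheme and recovers Myerson's formula). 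Now $\prod_{\ell'\neq\ell}(1-\sigma_{\ell'}x)=1+\sigma_\ell x+(\sigma_\ell^2-3q)x^2$ since $\sum_{\ell'\neq\ell}\sigma_{\ell'}=-\sigma_\ell$ and $\prod_{\ell'\neq\ell}\sigma_{\ell'}=\sigma_\ell^2-3q$, and one computes $N(x)$ from the power sums $p_1=0,\ p_2=6q,\ p_3=3qc,\ p_4=18q^2$, the relation $\sigma_\ell^3-3q\sigma_\ell=qc$, the elementary fact $\sum_\ell\sigma_{\ell+2a}\sigma_\ell=-3q$ ($a\in\{1,2\}$), and the key twisted sum
\[
\sum_{\ell=0}^{2}\sigma_{\ell+2a}\,\sigma_\ell^{\,2}=3\,g(\chi)^3\omega^{2a}+3\,g(\bar\chi)^3\omega^{-2a}=6q\,\mathrm{Re}\!\bigl(J\,\omega^{2a}\bigr),
\]
every other term of $\sigma_{\ell+2a}\sigma_\ell^2$ carrying a factor $\omega^{\pm\ell}$ and thus cancelling in the sum over $\ell$. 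For $a=0$ one gets exactly the cubic--case formula; for $a\in\{1,2\}$ one gets $\sum A_n(z)x^n=\frac{x}{1-qx}-\frac{x+(2-2\mathrm{Re}(J\omega^{2a}))x^2+cx^3}{1-3qx^2-qcx^3}$. Comparing with the claimed formula, everything reduces to showing that $d:=\frac1{9}\bigl(c+4\,\mathrm{Re}(J(\chi,\chi)\omega^{2a})\bigr)$ is the quantity of (\ref{cd}); a short computation with $J-\bar J=3d_0\sqrt{-3}$ shows $d=d_0$ when $a=1$ and $d=-d_0$ when $a=2$, so $4q=c^2+27d^2$, $c\equiv1\ (\bmod\ 3)$ and $(c,p)=1$ all hold, leaving only the congruence.

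It suffices to prove $9d\equiv c(2\omega^a+1)\ (\bmod\ \mathfrak p)$, since $z^{(q-1)/3}\equiv\omega^a\ (\bmod\ \mathfrak p)$. Substituting $9d=c+2(J\omega^{2a}+\bar J\omega^{-2a})$ and $c=J+\bar J$, multiplying through by $\omega^{-a}$ and using $\omega^3=1$, this congruence collapses to
\[
J(\chi,\chi)\,(\omega^a-1)\equiv0\ (\bmod\ \mathfrak p);
\]
since $\omega^a-1$ is a unit modulo $\mathfrak p$ when $a\in\{1,2\}$ ($\mathfrak p\nmid3$), it is equivalent to $\mathfrak p\mid J(\chi,\chi)$. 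To see the latter, reduce the Jacobi sum using the normalization: $J(\chi,\chi)=\sum_{t}\chi(t)\chi(1-t)\equiv\sum_{t\in\mathbb F_q}t^{(q-1)/3}(1-t)^{(q-1)/3}\ (\bmod\ \mathfrak p)$, and expanding $(1-t)^{(q-1)/3}$ by the binomial theorem writes this as a $\mathbb Z$--combination of $\sum_{t\in\mathbb F_q}t^{r}$ with $r=\frac{q-1}{3}+j$, $0\le j\le\frac{q-1}{3}$; every such $r$ satisfies $1\le r<q-1$, so $(q-1)\nmid r$ and $\sum_{t\in\mathbb F_q}t^{r}=0$. Hence $\mathfrak p\mid J(\chi,\chi)$ and the congruence holds, completing the non-cubic case. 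When $p\equiv2\ (\bmod\ 3)$, $p$ is inert in $\mathbb Z[\omega]$, so the primary $J(\chi,\chi)$ is a rational integer; then $d=0$, $p\mid c$, the congruence is automatic, and the formula is the above with $d=0$.

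The character--sum manipulations and the Newton--identity computations are routine. The two delicate points are: (i) the twisted power sum $\sum_\ell\sigma_{\ell+2a}\sigma_\ell^2=6q\,\mathrm{Re}(J\omega^{2a})$, where one must expand $\sigma_{\ell+2a}\sigma_\ell^2$ carefully and verify that only the $g(\chi)^3\omega^{2a}$ and $g(\bar\chi)^3\omega^{-2a}$ contributions survive the sum over $\ell$; and (ii) the coherence of the normalizations — that the prime $\mathfrak p$ used to normalize $\chi$ (hence to fix $a$ through $\omega^a\equiv z^{(q-1)/3}$) is the one dividing $J(\chi,\chi)$, and that the primary normalization makes $(c,d)$ literally the pair of (\ref{cd}). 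I expect (ii) to be the main obstacle: one must track which prime above $p$ is in play and confirm the signs match throughout. It is reassuring that both choices of $\mathfrak p$ (equivalently, $\chi$ and $\bar\chi$) are primary and both divide $J(\chi,\chi)$, so the final answer is independent of the choice, as it must be since $A_n(z)$ is.
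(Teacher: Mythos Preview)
Your proof is correct and reaches the same destination as the paper, but the route is organized differently. The paper first proves Theorem~1.5 for $B_n(z)$ by writing $B_n(z)=q^n+\frac1q\sum_{a\neq0}S(a)^nS(az)$, splitting over the three cubic cosets, and using Lemma~2.10 for the twisted sum $S(1)^2S(z)+S(z)^2S(z^2)+S(z^2)^2S(1)=\frac32q(9d-c)$; it then recovers $A_n(z)$ from $B_n(z)=A_n(0)+(q-1)A_n(z)$ together with Myerson's formula for $A_n(0)$. You instead attack $A_n(z)$ directly with the $\mu_3$-filter, obtaining $A_n(z)=q^{n-1}+\frac1{3q}\sum_\ell\sigma_\ell^{\,n}(\sigma_{\ell+2a}-1)$; your $\sigma_\ell$ are exactly the paper's $S(1),S(z),S(z^2)$ in some order, and your twisted sum $\sum_\ell\sigma_{\ell+2a}\sigma_\ell^{\,2}=6q\,\mathrm{Re}(J\omega^{2a})$ plays the role of Lemma~2.10. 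Your approach is more self-contained --- it does not lean on Myerson's $A_n(0)$ or on the $B_n$ detour --- and treats the cubic and non-cubic cases uniformly. On the sign of $d$, both arguments reduce to $\mathfrak p\mid J(\chi,\chi)$; the paper quotes \cite[Theorem~2.1.14]{BEW}, whereas your binomial-expansion computation $J(\chi,\chi)\equiv\sum_{t\in\mathbb F_q}t^{(q-1)/3}(1-t)^{(q-1)/3}\equiv0\pmod{\mathfrak p}$ gives an elementary proof.

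Two small points deserve tightening. First, the assertion $(c,p)=1$ when $p\equiv1\pmod3$ needs the fact that $J(\chi,\chi)$ lies over a \emph{single} prime above $p$ (e.g.\ via Davenport--Hasse, as in the paper's Lemma~2.8); your binomial argument gives $\mathfrak p\mid J$ but not $\bar{\mathfrak p}\nmid J$, and for $k\ge2$ this is not automatic from $|J|^2=q$ alone. Second, your closing parenthetical that ``both primes divide $J(\chi,\chi)$'' is not literally correct: each choice of $\mathfrak p$ determines its own normalization of $\chi$, and that $\mathfrak p$ divides the \emph{corresponding} Jacobi sum --- which is indeed why the final formula is independent of the choice.
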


\begin{theorem}
Let $\mathbb{F}_q$ be a finite field of $q=p^k$ elements with $q\equiv 1(\bmod 3)$ and $z\in \mathbb{F}^{\ast}_q$ be non-cubic. Then we have
\begin{align*}
\sum_{n=0}^{\infty}B_n(z)x^n=\frac{1}{1-qx}-\frac{(q-1)x+\frac{1}{2}(q-1)(c-9d)x^2}{1-3qx^2-qcx^3},
\end{align*}
where $c$ and $d$ are uniquely determined by (\ref{cd}).
\end{theorem}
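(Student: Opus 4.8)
The plan is to reduce the statement to Myerson's formula (Theorem~1.1) and to the generating function for $A_n(z)$ with $z$ non-cubic (Theorem~1.4) by means of an elementary counting identity, after which only a routine manipulation of rational functions remains.

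\emph{Step 1: a counting identity.} I would first prove that for every $n\ge 0$ and every non-cubic $z\in\mathbb{F}_q^{\ast}$,
\begin{equation*}
B_n(z)=A_n(0)+(q-1)A_n(z).
\end{equation*}
Sorting the solutions of $x_1^3+\dots+x_n^3+zx_{n+1}^3=0$ by the value $w=x_{n+1}$ gives $B_n(z)=\sum_{w\in\mathbb{F}_q}A_n(-zw^3)$, and the term $w=0$ contributes $A_n(0)$. For any nonzero argument, the substitution $(x_1,\dots,x_n)\mapsto(\lambda x_1,\dots,\lambda x_n)$ shows that $A_n$ depends only on the coset of that argument in $\mathbb{F}_q^{\ast}/(\mathbb{F}_q^{\ast})^3$; since $-1=(-1)^3$ is a cube, $-zw^3$ lies in the same coset as $z$, so $A_n(-zw^3)=A_n(z)$ for every $w\ne 0$, and summing over the $q-1$ such $w$ gives the identity. (At $n=0$ it reads $1=1+(q-1)\cdot 0$, consistent with $B_0(z)=1$, $A_0(0)=1$, $A_0(z)=0$.)

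\emph{Step 2: generating functions.} Since $A_0(0)=1$ and $A_0(z)=0$, Theorem~1.1 gives $\sum_{n\ge 0}A_n(0)x^n=1+\frac{x}{1-qx}+\frac{(q-1)(2+cx)x^2}{1-3qx^2-qcx^3}$, and Theorem~1.4 gives $\sum_{n\ge 0}A_n(z)x^n=\frac{x}{1-qx}-\frac{x+\frac12(4+c-9d)x^2+cx^3}{1-3qx^2-qcx^3}$, where $c$ and $d$ are as in \eqref{cd}; note that the $c$ of \eqref{cd} is exactly the $c$ of \eqref{c}, so the two formulas involve the same $c$ and only the sign of $d$ is at stake. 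Inserting these into the identity of Step~1 and abbreviating $D=1-3qx^2-qcx^3$,
\begin{equation*}
\sum_{n\ge 0}B_n(z)x^n=\Big(1+\frac{qx}{1-qx}\Big)+\frac{(q-1)\big[(2+cx)x^2-x-\tfrac12(4+c-9d)x^2-cx^3\big]}{D}.
\end{equation*}
The first term collapses to $\frac{1}{1-qx}$; in the numerator the $cx^3$ terms cancel, and writing $\tfrac12(4+c-9d)x^2=2x^2+\tfrac12(c-9d)x^2$ makes the $2x^2$ cancel as well, leaving $(q-1)\big[-x-\tfrac12(c-9d)x^2\big]$. This is exactly the asserted formula.

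\emph{Where the real difficulty lies.} Given Theorems~1.1 and~1.4, nothing above is hard — one only has to keep track of the $n=0$ terms and observe that $c$ is common to the two cited formulas. The substance is imported from Theorem~1.4, namely the determination of the sign of $d$. A self-contained argument would instead start from $qB_n(z)=\sum_{t\in\mathbb{F}_q}\sum_{x_1,\dots,x_{n+1}\in\mathbb{F}_q}\psi\big(t(x_1^3+\dots+x_n^3+zx_{n+1}^3)\big)$ with $\psi$ the canonical additive character, evaluate $\sum_x\psi(tx^3)=\bar\chi(t)g(\chi)+\chi(t)g(\bar\chi)$ for a cubic character $\chi$ and $t\ne 0$, and then use that the three numbers $\theta_\zeta=\bar\zeta g(\chi)+\zeta g(\bar\chi)$ (indexed by the cube roots of unity $\zeta$) are the roots of $T^3-3qT-qc$, because $g(\chi)^3+g(\bar\chi)^3=qc$; this produces the denominator $1-3qx^2-qcx^3$ and a numerator that collapses to a constant multiple of $\overline{\chi(z)}\,J(\chi,\chi)+\chi(z)\,\overline{J(\chi,\chi)}$, where $J(\chi,\chi)=\tfrac12(c+3d\sqrt{-3})$. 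The one genuinely hard step is to evaluate this last quantity in terms of $z^{(q-1)/3}$, i.e.\ to fix the sign of $d$ via a congruence for the cubic Jacobi sum modulo a prime above $p$; this is the origin of the condition $9d\equiv c(2z^{(q-1)/3}+1)\pmod p$ in \eqref{cd} and the point on which the present paper improves \cite{HZ}.
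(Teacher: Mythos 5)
Your Steps 1--2 are mathematically correct as far as they go: the identity $B_n(z)=A_n(0)+(q-1)A_n(z)$ is valid (and your coset argument for it is fine), and the rational-function algebra combining Theorem~1.1 with Theorem~1.4 does reproduce the stated formula, including the $n=0$ bookkeeping. The problem is the direction of the reduction. In this paper Theorem~1.4 is not an independent input: its proof consists precisely of the identity $B_n(z)=A_n(0)+(q-1)A_n(z)$ applied in the opposite direction, i.e.\ Theorem~1.4 is \emph{deduced from} Theorem~1.5 together with Myerson's Theorem~1.1. So invoking Theorem~1.4 to prove Theorem~1.5 is circular within the paper's logical structure, and the genuinely new content --- the determination of the sign of $d$ by the congruence $9d\equiv c\bigl(2z^{\frac{q-1}{3}}+1\bigr)\pmod p$ in \eqref{cd}, as opposed to the generator-dependent $\delta_z(q)$ of Hong--Zhu --- is imported rather than proved. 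Your proposal would only become a proof if Theorem~1.4 were first established independently, which is exactly the task you defer.

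Your closing sketch of a ``self-contained argument'' is in fact the paper's actual proof: write $B_n(z)=q^n+\frac{1}{q}\sum_{a\in\mathbb{F}_q^{\ast}}S(a)^nS(az)$, sum the geometric series, split $\mathbb{F}_q^{\ast}$ into the cosets $H$, $zH$, $z^2H$, and express the result through the elementary symmetric functions of $S(1),S(z),S(z^2)$, using that these are the roots of $x^3-3qx-qc$ (Lemma~2.6) so that $\alpha=0$, $\beta=-3q$, $\delta=qc$. The one step you label ``genuinely hard'' is exactly what is missing from your write-up and what the paper supplies in Lemmas~2.8--2.10: choosing the cubic character $\chi$ with $\chi(z)=\omega$, proving $G^3(\chi)=qJ(\chi,\chi)=q\cdot\frac{c+3\sqrt{3}d\mathrm{i}}{2}$ with $d$ pinned down by reducing $J(\chi,\chi)\equiv 0$ modulo a prime ideal $P$ above $p$ (via Hasse--Davenport and the lifted character), and then computing $S(1)^2S(z)+S(z)^2S(z^2)+S(z^2)^2S(1)=\frac{3}{2}q(9d-c)$. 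Until that evaluation is carried out, your argument establishes only that Theorems~1.4 and~1.5 are equivalent modulo Theorem~1.1, not that either of them holds with the sign of $d$ as specified in \eqref{cd}.
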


\begin{remark}
When $q\equiv 2(\bmod 3)$, it is known that every element is a cube, so $N_n(z)=q^{n-1}$. If $q\equiv 1(\bmod 3)$ with
$p\equiv 2(\bmod 3)$, then Wolfmann \cite{W2} gave a formula for $N_n(z)$. By Theorem 16 of \cite{M2}, we have
\begin{align*}
c=\left\{
    \begin{array}{ll}
      -2p^{k/2}, & \hbox{if $k\equiv 0(\bmod 4)$;} \\
      2p^{k/2}, & \hbox{if $k\equiv 2(\bmod 4)$,}
    \end{array}
  \right.
\end{align*}
and $d=0$. Then for this case, Theorem 1.4 and 1.5 immediately follow from Theorem 1.2 and 1.3.
So in the rest of this paper, we focus on the case $q\equiv 1(\bmod 3)$ with $p\equiv 1(\bmod 3)$.

\end{remark}

For $a_1,a_2,a_3\in \mathbb{F}^{\ast}_q$,
let $M_k(a_1,a_2,a_3)$ be the number of solutions of
\begin{align*}
a_1x_1^3+a_2x_2^3+a_3x_3^3=0
\end{align*}
over $\mathbb{F}_q$ and let $N_k(a_1,a_2,a_3)$ be the number of solutions of
\begin{align*}
a_1x_1^3+a_2x_2^3=a_3
\end{align*}
over $\mathbb{F}_q$.
For the case $q=p\equiv 1(\bmod 3)$, Chowla, Cowles and Cowles \cite{CCC1} showed that $M_1(1,1,1)=p^2+c(p-1)$.
As pointed out in \cite{CCC2}, the following is essentially included in the derivation of the cubic equation of periods by Gauss \cite{Gau}: Let a prime
$p\equiv 1(\bmod 3)$ and $z$ be non-cubic in $\mathbb{F}_p$. Then one has
$$M_1(1,1,z)=p^2+\frac{1}{2}(p-1)(9d-c),$$
where $c$ and $d$ are uniquely determined by (\ref{c}) (except for the sign of $d$).

Chowla, Cowles and Cowles \cite{CCC2} determined the sign of $d$ for the case of 2 being non-cubic in $\mathbb{F}_p$.

\begin{theorem}[\cite{CCC2}]
Let a prime $p\equiv 1(\bmod 3)$. If 2 is non-cubic in $\mathbb{F}_p$, then for any non-cubic element $z$, one has
$$M_1(1,1,z)=p^2+\frac{1}{2}(p-1)(9d-c),$$
where $c$ and $d$ are uniquely determined by (\ref{c}) with
$$d\equiv c (\bmod 4)\ \  \mathrm{if}\ 4z\  \mathrm{is} \ \mathrm{cubic}$$
and
$$d\equiv -c (\bmod 4)\ \  \mathrm{if}\ 2z\  \mathrm{is} \ \mathrm{cubic}.$$
\end{theorem}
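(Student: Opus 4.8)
The plan is to run the classical Jacobi-sum derivation to the point already recalled in the excerpt, identify the sign of $d$ with the value of the cubic residue character of $2$, and then settle that last point by cubic reciprocity in $\mathbb{Z}[\omega]$. Concretely, fix a cubic multiplicative character $\chi$ of $\mathbb{F}_p^{\ast}$ and expand $M_1(1,1,z)$ using $\#\{x:x^3=u\}=1+\chi(u)+\chi^2(u)$ for $u\neq 0$. All mixed terms in which some but not all of the three characters are trivial vanish, and among the nontrivial ones only the index triples $(1,1,1)$ and $(2,2,2)$ survive, so that
\[
M_1(1,1,z)=p^2+(p-1)\bigl(\overline{\chi(z)}\,J(\chi,\chi)+\chi(z)\,\overline{J(\chi,\chi)}\bigr).
\]
Writing $J(\chi,\chi)=\tfrac12\bigl(c+3d\sqrt{-3}\bigr)$ with $c=2\,\mathrm{Re}\,J(\chi,\chi)$, one gets $c^2+27d^2=4p$, while $c\equiv 1\pmod 3$ follows from the classical congruence $J(\chi,\chi)\equiv -1\pmod 3$ in $\mathbb{Z}[\omega]$, and passing from $\chi$ to $\bar\chi=\chi^2$ flips the sign of $d$. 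Since $z$ is non-cubic, $\chi(z)\in\{\omega,\omega^2\}$, and a one-line computation of the real part shows the displayed sum equals $p^2+\tfrac12(p-1)(9d-c)$ exactly when $\chi(z)=\omega$. Thus, taking $\psi\in\{\chi,\bar\chi\}$ with $\psi(z)=\omega$ and $J(\psi,\psi)=\tfrac12\bigl(c+3d\sqrt{-3}\bigr)$, we already obtain $M_1(1,1,z)=p^2+\tfrac12(p-1)(9d-c)$ for this particular $(c,d)$; the entire content of the theorem is that this $d$ obeys the asserted congruences modulo $4$.

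Next I would unwind the hypotheses. Since $2$ is non-cubic, $4p=c^2+27d^2$ forces $c$ odd, hence $d$ odd, so exactly one of $d\equiv c$ and $d\equiv -c\pmod 4$ holds. Moreover the classes of $1,2,4$ represent $\mathbb{F}_p^{\ast}/(\mathbb{F}_p^{\ast})^3$, so $4z$ being a cube means $z\in 2(\mathbb{F}_p^{\ast})^3$ and $2z$ being a cube means $z\in 4(\mathbb{F}_p^{\ast})^3$; since $\psi(z)=\omega$ and $\psi(4)=\psi(2)^2$, the former forces $\psi(2)=\omega$ and the latter forces $\psi(2)=\omega^2$. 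Hence the theorem is equivalent to the local claim that $\psi(2)=\omega$ iff $c\equiv d\pmod 4$ and $\psi(2)=\omega^2$ iff $c\equiv -d\pmod 4$.

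To prove the local claim I would identify $\psi$ with the cubic residue symbol modulo the prime $\pi:=J(\psi,\psi)$ of $\mathbb{Z}[\omega]$ above $p$; this $\pi$ is primary ($\pi\equiv 2\pmod 3$) precisely because of the normalization $J(\psi,\psi)\equiv -1\pmod 3$, and it is the primary generator of the prime ideal attached to $\psi$. As $2$ is inert in $\mathbb{Z}[\omega]$ and is itself primary, cubic reciprocity together with Euler's criterion for the cubic symbol over the prime $2$ gives $\psi(2)=\bigl(\tfrac{2}{\pi}\bigr)_3=\bigl(\tfrac{\pi}{2}\bigr)_3\equiv\pi\pmod 2$ in $(\mathbb{Z}[\omega]/2)^{\ast}=\mathbb{F}_4^{\ast}=\{1,\omega,\omega^2\}$, so $\psi(2)$ is the unique cube root of unity congruent to $\pi$ modulo $2$. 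Finally, using $\sqrt{-3}=2\omega+1$ one has $\pi=J(\psi,\psi)=\tfrac{c+3d}{2}+3d\,\omega$, where $\tfrac{c+3d}{2}\in\mathbb{Z}$ (both $c,d$ odd) and $3d$ is odd; since $\tfrac{c+3d}{2}$ is even exactly when $c\equiv d\pmod 4$, reducing modulo $2$ yields $\pi\equiv\omega\pmod 2$ when $c\equiv d\pmod 4$ and $\pi\equiv 1+\omega=-\omega^2\equiv\omega^2\pmod 2$ when $c\equiv -d\pmod 4$. This is the local claim, and combined with the first two paragraphs it completes the proof.

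The main work — and the place where errors are easiest to make — is the bookkeeping in the last step: one must simultaneously pin down which of $\chi,\bar\chi$ is called $\psi$ (equivalently, the sign of $d$), that $J(\psi,\psi)$ is literally the primary generator of one of the two primes over $p$, and the exact identification of the abstract character $\psi$ with $\bigl(\tfrac{\cdot}{\pi}\bigr)_3$, so that $\psi(2)\equiv\pi\pmod 2$ holds on the nose rather than up to a conjugation; one should also note explicitly that this is exactly where the hypothesis "$2$ non-cubic" is used, since it is what makes $c,d$ odd and hence makes the reduction of $J(\psi,\psi)$ modulo $2$ (rather than modulo $1-\omega$) the informative one. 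A more computational alternative that avoids the last identification is to use $J(\psi,\psi)=\overline{\psi(4)}\,J(\rho,\psi)$ — obtained from $J(\psi,\psi)=\sum_x\psi(x)\psi(1-x)$ by the substitution $x=(1+t)/2$, with $\rho$ the quadratic character — together with Stickelberger's congruence for $J(\rho,\psi)$ modulo $2$, which again isolates $\psi(2)$ in terms of $c,d\pmod 4$.
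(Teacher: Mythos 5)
Your argument is essentially correct, and it is worth pointing out that the paper itself gives no proof of this statement: Theorem 1.7 is quoted from Chowla--Cowles--Cowles \cite{CCC2}, and the closest the paper comes is the machinery of Lemma 2.8 and Theorem 1.9, where the sign ambiguity in $d$ is resolved in a different way, namely by the congruence $9d\equiv c\,(2z^{(q-1)/3}+1)\pmod p$ obtained from the Berndt--Evans--Williams evaluation of $J(\chi',\chi')$ together with Hasse--Davenport. Your first paragraph (the expansion giving $M_1(1,1,z)=p^2+(p-1)\bigl(\overline{\chi}(z)J(\chi,\chi)+\chi(z)\overline{J(\chi,\chi)}\bigr)$ and the reduction to the choice of the character with $\psi(z)=\omega$) is exactly the computation the paper performs in proving Theorem 1.9, and your identification of $\pi=J(\psi,\psi)$ as the primary prime attached to $\psi$ is the same fact the paper extracts from $J(\chi_P,\chi_P)\equiv 0\pmod P$ in Lemma 2.8; where you genuinely diverge is in the endgame, converting the sign of $d$ into the value $\psi(2)$ and evaluating it by cubic reciprocity for the inert primary prime $2$ (via $\psi(2)=\bigl(\tfrac{\pi}{2}\bigr)_3\equiv\pi\pmod 2$), which recovers the mod-$4$ congruences of the CCC2 statement directly, whereas the paper's normalization works for all $p\equiv 1\pmod 3$ and all $q=p^k$ without any hypothesis on $2$ but expresses the sign through a congruence mod $p$ rather than mod $4$. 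The one step you assert without justification is that ``$2$ non-cubic forces $c$ odd, hence $d$ odd'': this is precisely Gauss's criterion that $2$ is cubic modulo $p$ if and only if $p=x^2+27y^2$, so it deserves either a citation or the one-line argument that falls out of your own computation (since $c\equiv d\pmod 2$ always, if both were even then $\pi\equiv\tfrac{c+3d}{2}\equiv 1\pmod 2$, forcing $\psi(2)=1$, contradicting the hypothesis); with that remark made explicit, your proof is complete.
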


In \cite{HZ}, Hong and Zhu solved the Gauss sign problem. In fact, they gave the following result.

\begin{theorem}[\cite{HZ}]
Let $z\in \mathbb{F}^{\ast}_q=\langle g\rangle$ be non-cubic and $q=p^k\equiv 1(\bmod 3)$. Then
$$M_k(1,1,z)=q^2+\frac{1}{2}(q-1)(-c-9\delta_z(q)d),$$
where $c$ and $d$ are uniquely determined by (\ref{c}) with $d>0$ and $\delta_z(q)$ is given as in (\ref{q}).

\end{theorem}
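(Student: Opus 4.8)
The plan is to compute $M_k(1,1,z)$ directly by the additive-character/Gauss-sum method; it is worth noting first that $M_k(1,1,z)$ is exactly $B_2(z)$, the number of solutions of $x_1^3+x_2^3+zx_3^3=0$, so the formula could also be read off from the coefficient of $x^2$ in the generating function of Theorem 1.5, but the direct route makes transparent why $\delta_z(q)$ is the right correction factor. Let $\psi$ be the canonical additive character of $\mathbb{F}_q$ and fix a cubic multiplicative character $\chi$ of $\mathbb{F}_q^{\ast}$ (which exists since $q\equiv 1\pmod 3$), extended by $\chi(0)=0$. Expressing the number of solutions through additive characters,
\[
M_k(1,1,z)=\frac1q\sum_{t\in\mathbb{F}_q}\Big(\sum_{x\in\mathbb{F}_q}\psi(tx^3)\Big)^2\Big(\sum_{x\in\mathbb{F}_q}\psi(tzx^3)\Big),
\]
the term $t=0$ gives $q^2$, and for $t\neq 0$ we use the standard identity $\sum_{x\in\mathbb{F}_q}\psi(tx^3)=\bar\chi(t)g(\chi)+\chi(t)g(\bar\chi)$, where $g(\chi)=\sum_{y\in\mathbb{F}_q^{\ast}}\chi(y)\psi(y)$.

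Substituting this identity, expanding the triple product, and simplifying with $\chi^2=\bar\chi$, $\chi(-1)=1$, and $g(\chi)g(\bar\chi)=q$, every resulting monomial carries a factor $\chi^{j}(t)$ for some integer $j$; summing over $t\in\mathbb{F}_q^{\ast}$ and using $\sum_{t\neq0}\chi^{j}(t)=q-1$ when $3\mid j$ and $0$ otherwise, only two monomials survive and one obtains
\[
M_k(1,1,z)=q^2+\frac{q-1}{q}\Big(\bar\chi(z)\,g(\chi)^3+\chi(z)\,g(\bar\chi)^3\Big).
\]
I would then pass to the Jacobi sum: from $g(\chi)^2=J(\chi,\chi)g(\chi^2)$ together with $g(\chi)g(\bar\chi)=q$ we get $g(\chi)^3=q\,J(\chi,\chi)$, hence
\[
M_k(1,1,z)=q^2+(q-1)\Big(\bar\chi(z)J(\chi,\chi)+\chi(z)\overline{J(\chi,\chi)}\Big)=q^2+2(q-1)\,\mathrm{Re}\big(\bar\chi(z)J(\chi,\chi)\big).
\]

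Now $J(\chi,\chi)\in\mathbb{Z}[\omega]$ with $\omega=e^{2\pi i/3}$ and $|J(\chi,\chi)|^2=q$; writing $J(\chi,\chi)=\tfrac12\big(a+3b\sqrt{-3}\big)$ with $a,b\in\mathbb{Z}$ forces $a^2+27b^2=4q$, and with the normalization in (\ref{c}) this makes $(a,b)=(\pm c,\pm d)$. Since $z$ is non-cubic we have $\bar\chi(z)=\omega^{j}$ with $j\in\{1,2\}$, and a short computation of $2\,\mathrm{Re}(\omega^{j}J(\chi,\chi))$ returns $\tfrac12(-c-9d)$ or $\tfrac12(-c+9d)$, according to $j$ and the signs of $a,b$. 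This already yields the asserted shape $q^2+\tfrac12(q-1)\big(-c-9\delta_z(q)d\big)$ with $d>0$ and some $\delta_z(q)\in\{-1,1\}$; the entire remaining content is to identify this $\delta_z(q)$ with the explicit quantity in (\ref{q}).

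The main obstacle is exactly this last point: determining $J(\chi,\chi)$ precisely — not merely up to complex conjugation and up to the sign of its imaginary part — as a function of $z$. For $q=p$ this is the classical problem going back to Gauss: reducing $J(\chi,\chi)$ modulo a suitable prime ideal above $3$ and using $z^{(p-1)/3}$ to locate $\chi(z)$ among $1,\omega,\omega^2$ pins the sign down through a congruence of the form $9d\equiv c\big(2z^{(p-1)/3}+1\big)\pmod p$, which is precisely condition (\ref{cd}) and is equivalent, when $2$ is non-cubic, to the $d\equiv\pm c\pmod 4$ criterion of Theorem 1.7. For general $q=p^k$ I would invoke the Hasse--Davenport product relation $-g(\chi\circ N_{\mathbb{F}_q/\mathbb{F}_p})=\big(-g(\chi_0)\big)^{k}$, with $\chi_0$ the cubic character of $\mathbb{F}_p^{\ast}$; cubing gives $J(\chi,\chi)=(-1)^{k-1}J(\chi_0,\chi_0)^{k}$, so the relevant sign becomes that of $\mathrm{Im}\big((r_1+3\sqrt3\,r_2\mathrm{i})^{k}\big)$ — with $r_1,r_2$ the $\mathbb{F}_p$-data attached to the generator $g$ — twisted by $(-1)^{\langle\mathrm{ind}_g(d)\rangle_3}$ coming from the relation between $\chi(z)$ and $\chi_0$; and when $k$ is even this imaginary part vanishes, consistently with $d=0$, $c=\pm2p^{k/2}$ as in Remark 1.6. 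Carefully tracking these normalizations then yields the stated formula.
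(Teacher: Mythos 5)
Your opening computation is correct and is essentially the paper's own route: expanding $M_k(1,1,z)=q^2+\frac1q\sum_{t\neq0}S(t)^2S(tz)$, using $S(a)=\overline{\chi}(a)G(\chi)+\chi(a)G(\overline{\chi})$ (Lemma 2.9), observing that only the two ``pure'' monomials survive the sum over $t$, and converting $G(\chi)^3=qJ(\chi,\chi)$, exactly as in the proof of Theorem 1.9; this correctly yields $M_k(1,1,z)=q^2+2(q-1)\,\mathrm{Re}\bigl(\overline{\chi}(z)J(\chi,\chi)\bigr)$ and hence the shape $q^2+\frac12(q-1)(-c\pm 9d)$.

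The gap is that the only nontrivial content of this theorem --- the exact determination of $J(\chi,\chi)$, i.e.\ which sign goes with $d$ and why it equals the explicit $\delta_z(q)$ of (\ref{q}) --- is not proved. Your last paragraph lists the right ingredients (Hasse--Davenport, a congruence for the Jacobi sum, the position of $\chi(z)$ relative to a generator) but ends with ``carefully tracking these normalizations then yields the stated formula,'' which is precisely the step that needs an argument. In the paper this is the whole of Lemma 2.8: lift $\chi$ from a cubic character $\chi'$ of $\mathbb{F}_p$, get $J(\chi,\chi)=(-1)^{k-1}J(\chi',\chi')^k$ from Lemmas 2.3 and 2.5, and then reduce modulo a prime ideal $P$ of $\mathbb{Z}[\omega]$ above $p$, using $J(\chi',\chi')\equiv 0\ (\mathrm{mod}\ P)$ (Theorem 2.1.14 of \cite{BEW}) to obtain $9d\equiv c(2z^{\frac{q-1}{3}}+1)\ (\mathrm{mod}\ p)$, which is what actually pins the sign; nothing equivalent is carried out in your sketch. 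Worse, the one concrete claim you do make for even $k$ is false: $\mathrm{Im}\bigl((r_1+3\sqrt{3}r_2\mathrm{i})^k\bigr)$ does not vanish for even $k$ when $p\equiv 1\ (\mathrm{mod}\ 3)$ (for $p=7$, $r_1=r_2=1$, one has $(1+3\sqrt{3}\mathrm{i})^2=-26+6\sqrt{3}\mathrm{i}$), and Remark 1.6's $d=0$, $c=\pm2p^{k/2}$ concerns $p\equiv 2\ (\mathrm{mod}\ 3)$, not even $k$ with $p\equiv 1\ (\mathrm{mod}\ 3)$; since $(c,p)=1$ forces $d\neq 0$ there, the $9d$-term cannot simply drop out, so the even-$k$ branch of the quoted formula needs a genuine separate justification (compare it with Theorem 1.9 and Example 4.1 for $q=49$, $z=u+1$). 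Finally, note that the paper itself does not reprove this cited result of \cite{HZ}; its Theorem 1.9 replaces $\delta_z(q)$ by the congruence normalization (\ref{cd for a}), which is exactly the device your proposal is missing.
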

In this paper, we consider $M_k(a_1,a_2,a_3)$, $N_k(a_1,a_2,a_3)$ and determine the sign of $d$ immediately by the coefficients $a_1,a_2$ and $a_3$. We have the following more general results.

\begin{theorem}
Let $\mathbb{F}_q$ be a finite field of $q=p^k$ elements with the prime $p\equiv 1(\bmod 3)$, and $a_1,a_2,a_3\in \mathbb{F}^{\ast}_q$.
Then
\begin{align*}
M_k(a_1,a_2,a_3)=\left\{
                   \begin{array}{ll}
                     q^2+c(q-1), & \hbox{if $a_1a_2a_3$ is cubic;} \\
                     q^2+\frac{1}{2}(q-1)(9d-c), & \hbox{if $a_1a_2a_3$ is non-cubic,}
                   \end{array}
                 \right.
\end{align*}
where $c$ and $d$ are uniquely determined by
\begin{align}\label{cd for a}
4q=c^2+27d^2, c\equiv 1(\bmod 3), (c,p)=1, 9d\equiv c(2(a_1a_2a_3)^{\frac{q-1}{3}}+1)(\bmod p).
\end{align}

\end{theorem}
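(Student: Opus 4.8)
The plan is to reduce the count $M_k(a_1,a_2,a_3)$ to the known evaluation of $M_k(1,1,z)$ via a change of variables, and then to track how the cubic residue symbol of the product $a_1a_2a_3$ controls the relevant sign. First I would observe that scaling each variable $x_i \mapsto \lambda_i x_i$ with $\lambda_i \in \mathbb{F}_q^\ast$ sends $a_i \mapsto a_i\lambda_i^3$, so $M_k(a_1,a_2,a_3)$ depends only on the classes of $a_1,a_2,a_3$ in $\mathbb{F}_q^\ast/(\mathbb{F}_q^\ast)^3$. Moreover multiplying the whole equation by $a_1^{-1}$ shows $M_k(a_1,a_2,a_3) = M_k(1, a_1^{-1}a_2, a_1^{-1}a_3)$, and then absorbing cubes we may assume the first two coefficients are $1$, reaching $M_k(1,1,z)$ with $z = a_1^{-2}a_2 a_3$ up to cubes; note $z \equiv a_1 a_2 a_3 \pmod{(\mathbb{F}_q^\ast)^3}$ since $a_1^3$ is a cube, so $z$ is cubic precisely when $a_1a_2a_3$ is cubic, and $(a_1a_2a_3)^{(q-1)/3} = z^{(q-1)/3}$.

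Next, for the cubic case, I would invoke the classical fact (going back to Chowla--Cowles--Cowles for $q=p$, and its extension to $\mathbb{F}_q$) that when $z$ is a cube the equation $x_1^3+x_2^3+x_3^3=0$ has $q^2 + c(q-1)$ solutions, where $c$ is the integer from $4q = c^2+27d^2$, $c\equiv 1\pmod 3$; since this count does not involve $d$ at all, the sign ambiguity is irrelevant and the formula transfers directly. For the non-cubic case the starting point is $M_k(1,1,z) = q^2 + \tfrac12(q-1)(9d'-c)$ for \emph{some} determination of the sign of $d'$ (as recorded in the excerpt following Gauss), and the whole content is to show that the correct $d'$ is the one pinned down by the congruence $9d' \equiv c(2 z^{(q-1)/3}+1) \pmod p$ in \eqref{cd for a}. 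For this I would go back to the Jacobi-sum expression: writing $\chi$ for a cubic multiplicative character of $\mathbb{F}_q$, the solution count is built from $\chi(z)$ and the Jacobi sum $J(\chi,\chi)$, which satisfies $J(\chi,\chi) = \tfrac12(c + 9d\sqrt{-3})$ for a suitable normalization, together with $|J(\chi,\chi)|^2 = q$. The key is to reconcile the two normalizations of $d$: the sign is fixed once one knows $J(\chi,\chi) \bmod p$ (or more precisely $\bmod \mathfrak{p}$ for a prime $\mathfrak{p}$ above $p$ in $\mathbb{Z}[\omega]$), and here one uses the congruence $J(\chi,\chi) \equiv -1 \pmod{\mathfrak{p}}$ of Gauss/Stickelberger type; combining this with $\chi(z)\equiv z^{(q-1)/3}\pmod{\mathfrak p}$ yields exactly the stated congruence for $9d$.

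Concretely, the key steps in order: (1) reduce to $M_k(1,1,z)$ by the scaling argument, tracking that $z \equiv a_1a_2a_3 \pmod{(\mathbb{F}_q^\ast)^3}$; (2) dispose of the cubic case by the classical formula, noting the absence of $d$; (3) in the non-cubic case, express $M_k(1,1,z)$ in terms of Gauss/Jacobi sums associated to a cubic character $\chi$, getting the shape $q^2 + \tfrac12(q-1)\big(\chi(z)J + \overline{\chi}(z)\overline{J}\big)$ with $J = J(\chi,\chi)$ and identifying its real part in terms of $c$ and $d$; (4) determine the sign of $d$ by reducing $J$ modulo a prime above $p$, using the congruence $J(\chi,\chi)\equiv -1$ and $\chi(z) \equiv z^{(q-1)/3}$, and matching against the normalization $\tfrac12(c+9d\sqrt{-3})$ to extract $9d \equiv c(2z^{(q-1)/3}+1) \pmod p$; (5) conclude by substituting back. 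The main obstacle is step (4): one must be careful about which generator of the cubic character group is chosen, how $\sqrt{-3}$ is normalized modulo $\mathfrak p$ (i.e. the choice of $\mathfrak p$ over $p$), and the fact that for $k$ even the purely real case ($d$ effectively $0$, or the congruence forcing a consistent sign) has to be handled so that the single clean congruence in \eqref{cd for a} covers all $k$; getting these compatibilities exactly right — so that the ambiguous $\pm d$ from $4q=c^2+27d^2$ is replaced by a genuinely well-defined $d$ modulo $p$ — is where the real work lies, and it is precisely what upgrades the earlier generator-dependent statements to a formula in terms of $a_1,a_2,a_3$ alone.
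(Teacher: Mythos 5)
Your step (1) reduction is not valid in general. Multiplying the equation by a scalar and rescaling each $x_i$ only changes the coefficient triple by a common factor and by individual cubes, so the invariants of these operations are the cube classes of the ratios $a_2a_1^{-1},a_3a_1^{-1}$ (up to a simultaneous shift and permutation), not merely the class of the product. For $(a_1,a_2,a_3)=(1,g,g^2)$ with $g$ non-cubic the three coefficients lie in three distinct cube classes, and no such substitution brings the triple to the form $(1,1,z)$; yet $a_1a_2a_3=g^3$ is cubic, so this configuration falls squarely inside your ``cubic case''. (When $a_1a_2a_3$ is non-cubic the three classes cannot be pairwise distinct, so there your reduction does work.) Thus the fact that $M_k(a_1,a_2,a_3)$ depends only on the cube class of $a_1a_2a_3$ is itself part of what must be proved. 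The paper never reduces to $M_k(1,1,z)$: it writes $M_k=q^2+\frac1q\sum_{m\in\mathbb{F}_q^{\ast}}S(a_1m)S(a_2m)S(a_3m)$, expands each $S(a_jm)=\overline{\chi}(a_jm)G(\chi)+\chi(a_jm)G(\overline{\chi})$, and uses $\sum_{m}\chi(m)=0$ to kill all cross terms, leaving only $\overline{\chi}(a_1a_2a_3)G^3(\chi)+\chi(a_1a_2a_3)G^3(\overline{\chi})$ --- this is exactly where the dependence on the product alone comes from.

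The decisive step (4) also rests on a congruence that is false. There is no congruence $J(\chi,\chi)\equiv-1\pmod{\mathfrak p}$ for a prime $\mathfrak p$ of $\mathbb{Z}[\omega]$ above $p$: the classical facts are $J(\chi,\chi)\equiv-1\pmod 3$ (primarity, which only pins down $c\equiv1\pmod 3$) and, for the prime $\mathfrak p$ matching the normalization of the cubic residue character, $J(\chi,\chi)\equiv 0\pmod{\mathfrak p}$; the latter is what the paper uses (Theorem 2.1.14 of Berndt--Evans--Williams, inside Lemma 2.8). With the correct normalization $J(\chi,\chi)=\frac{c+3\sqrt{3}d\,\mathrm{i}}{2}$ (your $\frac12(c+9d\sqrt{-3})$ is incompatible with $|J(\chi,\chi)|^2=q$ and $4q=c^2+27d^2$), the congruence $J\equiv0$ together with $\sqrt{-3}=2\omega+1\equiv 2z^{\frac{q-1}{3}}+1\pmod{\mathfrak p}$ yields $9d\equiv c\bigl(2z^{\frac{q-1}{3}}+1\bigr)\pmod p$ as required, whereas your $J\equiv-1$ would yield $9d\equiv (c+2)\bigl(2z^{\frac{q-1}{3}}+1\bigr)$, a different and incorrect sign rule. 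Two further points you gloss over: since the residue field of $\mathfrak p$ is $\mathbb{F}_p$, not $\mathbb{F}_q$, the reduction must pass through Hasse--Davenport, $J(\chi,\chi)=(-1)^{k-1}J(\chi',\chi')^k$ with $\chi$ lifted from $\mathbb{F}_p$ (this, with the bookkeeping of the sign $(-1)^{k-1}$ and of $\chi(z)=\omega$, is the content of the paper's Lemma 2.8); and your remark that for $k$ even one has ``$d$ effectively $0$'' is mistaken when $p\equiv1\pmod 3$ (e.g.\ $q=49$ has $c=13$, $d=\pm1$) --- the $d=0$ degeneration occurs only for $p\equiv2\pmod 3$, which the paper disposes of separately in Remark 1.6.
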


\begin{theorem}
Let $\mathbb{F}_q$ be a finite field of $q=p^k$ elements with the prime $p\equiv 1(\bmod 3)$, and $a_1,a_2,a_3\in \mathbb{F}^{\ast}_q$.

(1) For the case of $a_1a_2a_3$ being cubic, we have
$$N(a_1,a_2,a_3)=\left\{
                   \begin{array}{ll}
                     q-2+c, & \hbox{if $a_1a_2^{-1}$ is cubic;} \\
                     q+1+c, & \hbox{otherwise.}
                   \end{array}
                 \right.
$$

(2) For the case of $a_1a_2a_3$ being non-cubic, we have
$$N(a_1,a_2,a_3)=\left\{
                   \begin{array}{ll}
                    q-2+\frac{1}{2}(9d-c), & \hbox{if $a_1a_2^{-1}$ is cubic;} \\
                    q+1+\frac{1}{2}(9d-c), & \hbox{otherwise,}
                   \end{array}
                 \right.
$$
where $c$ and $d$ are uniquely determined by (\ref{cd for a}).

\end{theorem}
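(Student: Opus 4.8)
The plan is to reduce the counting of $N_k(a_1,a_2,a_3)$, the number of solutions of $a_1x_1^3+a_2x_2^3=a_3$, to the already-established Theorem 1.8 on $M_k(a_1,a_2,a_3)$ together with an auxiliary count governing the ``$x_3=0$'' slice. First I would observe that if $(x_1,x_2,x_3)$ runs over all solutions of the homogeneous equation $a_1x_1^3+a_2x_2^3+(-a_3)x_3^3=0$, then the solutions with $x_3\neq 0$ are in bijection (via $x_i\mapsto x_i/x_3$, which is legitimate since cubing commutes with scaling) with $q-1$ copies of the solution set of $a_1x_1^3+a_2x_2^3=a_3$; hence
\begin{align*}
M_k(a_1,a_2,-a_3)=(q-1)N_k(a_1,a_2,a_3)+M'_k(a_1,a_2),
\end{align*}
where $M'_k(a_1,a_2)$ is the number of solutions of $a_1x_1^3+a_2x_2^3=0$ in $\mathbb{F}_q^2$. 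Since $p\equiv 1\ (\mathrm{mod}\ 3)$ the map $x\mapsto x^3$ is three-to-one on $\mathbb{F}_q^{\ast}$, so $M'_k(a_1,a_2)=1$ if $-a_1a_2^{-1}$ (equivalently $-a_1^{-1}a_2$) is a non-cubic, and $M'_k(a_1,a_2)=1+3(q-1)/3\cdot\#\{\ldots\}$; more precisely, $M'_k(a_1,a_2)$ equals $1$ when $-a_2/a_1$ is non-cubic and $1+3(q-1)$ when it is cubic, because each cubic value of $x_2^3$ that solves the equation pulls back to $3$ values of $x_2$ and correspondingly $3$ values of $x_1$.

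Next I would plug in Theorem 1.8 applied to the triple $(a_1,a_2,-a_3)$, noting that $a_1a_2(-a_3)=-a_1a_2a_3$ and that $-1$ is a cube in $\mathbb{F}_q$ when $p\equiv 1\ (\mathrm{mod}\ 3)$ (indeed $q\equiv 1\ (\mathrm{mod}\ 3)$ forces $q$ odd when $p\neq 2$, and $-1=(-1)^3$ anyway), so $-a_1a_2a_3$ is cubic exactly when $a_1a_2a_3$ is, and the parameters $c,d$ from (\ref{cd for a}) are unchanged since $(-a_1a_2a_3)^{(q-1)/3}=(a_1a_2a_3)^{(q-1)/3}$. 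Solving the displayed identity for $N_k$ then gives, in the cubic case,
\begin{align*}
N_k(a_1,a_2,a_3)=\frac{q^2+c(q-1)-M'_k(a_1,a_2)}{q-1},
\end{align*}
and substituting $M'_k(a_1,a_2)=1$ or $1+3(q-1)$ yields $q+1+c$ or $q-2+c$ respectively; the non-cubic case is identical with $c$ replaced by $\tfrac12(9d-c)$, because $q^2=(q-1)(q+1)+1$ makes $(q^2-1)/(q-1)=q+1$ exact. The translation between ``$-a_2/a_1$ cubic'' and the stated condition ``$a_1a_2^{-1}$ cubic'' is immediate since $-1$ is a cube.

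The main obstacle — really the only delicate point — is getting the bookkeeping of $M'_k(a_1,a_2)$ exactly right and making sure the two regimes of Theorem 1.8 line up with the two regimes of the desired formula after the division by $q-1$; one must check that the constant terms $q^2\mp\cdots$ minus $M'_k$ are genuinely divisible by $q-1$, which they are precisely because $q^2\equiv 1$, $c(q-1)\equiv 0$, and $M'_k\equiv 1\pmod{q-1}$ in all cases (note $3(q-1)\equiv 0$). I would also double-check the edge convention about which element ``$a_1a_2a_3$ cubic'' refers to, so that Theorem 1.8's hypothesis is invoked with the correct product; beyond that the argument is a short, self-contained deduction from Theorem 1.8 and does not require reopening any Gauss- or Jacobi-sum computation.
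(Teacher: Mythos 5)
Your proposal is correct and follows essentially the same route as the paper: split the homogeneous count according to whether $x_3=0$, identify the $x_3\neq 0$ part with $(q-1)N_k(a_1,a_2,a_3)$ by scaling, count the $x_3=0$ slice as $1$ or $1+3(q-1)$ according to whether $a_1a_2^{-1}$ is cubic, and invoke the general formula for $M_k$ (Theorem 1.9 in the paper's numbering, not 1.8). The only cosmetic difference is that you apply that formula to $(a_1,a_2,-a_3)$ and then check that negating $a_3$ changes neither the cubic status of the product nor the normalization of $c,d$ in (\ref{cd for a}), whereas the paper absorbs the sign into the substitution $x_i\mapsto -x_ix_3^{-1}$ and works with $M_k(a_1,a_2,a_3)$ directly.
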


\section{Auxiliary Lemmas}

\begin{lemma}[\cite{LN}]
Let $\mathbb{F}_q$ be a finite field. Let $\chi$ be a nontrivial multiplicative character of $\mathbb{F}_q$ and $\psi$ be a nontrivial additive character of $\mathbb{F}_q$. Then for any $a\in \mathbb{F}_q$, we have
$$
\sum_{x\in \mathbb{F}^{\ast}_q}\chi(x)=0,\ \
\sum_{x\in \mathbb{F}_q}\psi(ax)=\left\{
                                   \begin{array}{ll}
                                     q, & \hbox{if $a=0$;} \\
                                     0, & \hbox{if $a\neq 0$.}
                                   \end{array}
                                 \right.
$$
\end{lemma}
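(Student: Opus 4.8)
The plan is to prove both identities by the standard orthogonality argument: exploit the nontriviality of the relevant character together with the fact that multiplication (resp.\ translation) by a fixed element permutes the group $\mathbb{F}_q^{\ast}$ (resp.\ the additive group $\mathbb{F}_q$).

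For the first identity, set $S=\sum_{x\in\mathbb{F}_q^{\ast}}\chi(x)$. Since $\chi$ is nontrivial, choose $y\in\mathbb{F}_q^{\ast}$ with $\chi(y)\neq 1$. As $x$ ranges over $\mathbb{F}_q^{\ast}$ the product $yx$ also ranges over $\mathbb{F}_q^{\ast}$, so using multiplicativity of $\chi$ one gets $\chi(y)S=\sum_{x\in\mathbb{F}_q^{\ast}}\chi(yx)=S$. Hence $(\chi(y)-1)S=0$, and since $\chi(y)\neq 1$ we conclude $S=0$.

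For the second identity, the case $a=0$ is immediate: each summand equals $\psi(0)=1$, so the sum is $|\mathbb{F}_q|=q$. For $a\neq 0$, the substitution $t=ax$ is a bijection of $\mathbb{F}_q$ onto itself, hence $\sum_{x\in\mathbb{F}_q}\psi(ax)=\sum_{t\in\mathbb{F}_q}\psi(t)=:T$. Now pick $b\in\mathbb{F}_q$ with $\psi(b)\neq 1$, which exists because $\psi$ is nontrivial. Using additivity $\psi(b)\psi(t)=\psi(b+t)$ and the fact that $t\mapsto b+t$ permutes $\mathbb{F}_q$, we obtain $\psi(b)T=\sum_{t\in\mathbb{F}_q}\psi(b+t)=T$, so $(\psi(b)-1)T=0$ and therefore $T=0$.

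There is essentially no obstacle here: this is a textbook fact (the statement is cited from Lidl--Niederreiter), and the only points requiring a word of justification are the two reindexings, namely that left multiplication by a fixed nonzero scalar permutes $\mathbb{F}_q^{\ast}$ and that additive translation by a fixed element permutes $\mathbb{F}_q$, both of which follow at once from the group axioms.
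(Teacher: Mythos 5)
Your proof is correct and is the standard orthogonality argument (the same one found in Lidl--Niederreiter, which the paper simply cites without proof): both reindexing steps are justified and the nontriviality of the characters is used exactly where needed. Nothing to add.
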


For any $a\in \mathbb{F}^{\ast}_q$, we defined the sums
\begin{align*}
S(a)=\sum_{x\in \mathbb{F}_q}\psi(ax^3)
\end{align*}
and
\begin{align*}
G(\chi,\psi)=\sum_{x \in \mathbb{F}^{\ast}_q}\chi(x)\psi(x),
\end{align*}
where $\chi$ is a multiplicative character of $\mathbb{F}_q$ and $\psi$ is an additive character of $\mathbb{F}_q$. Both $S(a)$ and $G(\chi)$ are called Gauss sums.

\begin{lemma}[\cite{LN}]
Let $\chi$ be a nontrivial multiplicative character and $\psi$ a nontrivial additive character of $\mathbb{F}_q$. Then $|G(\chi,\psi)|=\sqrt{q}$ and $G(\chi,\psi)G(\overline{\chi},\psi)=\chi(-1)q$.
\end{lemma}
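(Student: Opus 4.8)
The plan is to compute the squared modulus $|G(\chi,\psi)|^2=G(\chi,\psi)\overline{G(\chi,\psi)}$ directly and show it equals $q$, and then to relate $G(\overline{\chi},\psi)$ to $\overline{G(\chi,\psi)}$ by a sign-change substitution, which immediately yields the product formula.

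First I would expand the conjugate. Since $\chi$ and $\psi$ take values on the unit circle, $\overline{G(\chi,\psi)}=\sum_{y\in\mathbb{F}_q^{\ast}}\overline{\chi}(y)\psi(-y)$. Multiplying, $G(\chi,\psi)\overline{G(\chi,\psi)}=\sum_{x,y\in\mathbb{F}_q^{\ast}}\chi(x)\overline{\chi}(y)\psi(x-y)$. Because $y$ ranges over $\mathbb{F}_q^{\ast}$, I would substitute $x=ty$ with $t\in\mathbb{F}_q^{\ast}$; then $\chi(x)\overline{\chi}(y)=\chi(t)$, and the double sum becomes $\sum_{t\in\mathbb{F}_q^{\ast}}\chi(t)\sum_{y\in\mathbb{F}_q^{\ast}}\psi((t-1)y)$.

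Next I would evaluate the inner sum using Lemma 2.1: for $t=1$ it equals $q-1$, and for $t\neq 1$ it equals $\sum_{y\in\mathbb{F}_q}\psi((t-1)y)-1=-1$. Hence the total is $\chi(1)(q-1)-\sum_{t\neq 1}\chi(t)=(q-1)-\big(\sum_{t\in\mathbb{F}_q^{\ast}}\chi(t)-\chi(1)\big)=(q-1)-(0-1)=q$, again invoking $\sum_{t\in\mathbb{F}_q^{\ast}}\chi(t)=0$ from Lemma 2.1. This gives $|G(\chi,\psi)|=\sqrt{q}$.

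For the product formula, I would perform the change of variable $y\mapsto -y$ in $\overline{G(\chi,\psi)}=\sum_{y\in\mathbb{F}_q^{\ast}}\overline{\chi}(y)\psi(-y)$ to obtain $\overline{G(\chi,\psi)}=\overline{\chi}(-1)\sum_{y\in\mathbb{F}_q^{\ast}}\overline{\chi}(y)\psi(y)=\overline{\chi}(-1)G(\overline{\chi},\psi)$. Since $\chi(-1)^2=\chi(1)=1$, we have $\chi(-1)=\overline{\chi}(-1)=\pm1$, so $G(\overline{\chi},\psi)=\chi(-1)\overline{G(\chi,\psi)}$, whence $G(\chi,\psi)G(\overline{\chi},\psi)=\chi(-1)\,|G(\chi,\psi)|^2=\chi(-1)q$. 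I do not expect a genuine obstacle here; the only steps requiring care are the bookkeeping in the substitution $x=ty$ and the separation of the $t=1$ term before applying the orthogonality relation $\sum_{t\in\mathbb{F}_q^{\ast}}\chi(t)=0$.
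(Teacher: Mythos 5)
Your proof is correct. The paper itself gives no argument for this lemma --- it is quoted from Lidl--Niederreiter [LN] --- and your computation (expanding $G(\chi,\psi)\overline{G(\chi,\psi)}$, substituting $x=ty$, separating the $t=1$ term, and applying the orthogonality relations of Lemma 2.1, then using $y\mapsto -y$ together with $\chi(-1)=\overline{\chi}(-1)=\pm 1$ to get $G(\overline{\chi},\psi)=\chi(-1)\overline{G(\chi,\psi)}$) is precisely the standard textbook proof of that cited result, so there is nothing to add.
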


Let $\mathbb{F}_q$ be the finite extension of $\mathbb{F}_p$ with $[\mathbb{F}_q:\mathbb{F}_p]=k$. Recall that the trace $\mathrm{Tr}_{\mathbb{F}_q/\mathbb{F}_p} (\alpha)$ and norm $\mathrm{N}_{\mathbb{F}_q/\mathbb{F}_p} (\alpha)$ of $\alpha\in \mathbb{F}_q$ over $\mathbb{F}_p$ are defined by
\begin{align*}
\mathrm{Tr}_{\mathbb{F}_q/\mathbb{F}_p} (\alpha)=\alpha+\alpha^p+\cdots+\alpha^{p^{k-1}}
\end{align*}
and
\begin{align*}
\mathrm{N}_{\mathbb{F}_q/\mathbb{F}_p} (\alpha)=\alpha\times\alpha^p\times\cdots\times\alpha^{p^{k-1}}=\alpha^{\frac{q-1}{p-1}}.
\end{align*}

\begin{lemma}[Hasse-Davenport Theorem \cite{LN}]
Let $\mathbb{F}_q$ be the finite extension of $\mathbb{F}_p$ with $[\mathbb{F}_q:\mathbb{F}_p]=k$. Let $\chi'$ be a multiplicative character and $\psi'$ an additive character of $\mathbb{F}_p$, not both of them trivial. Suppose that $\chi$ and $\psi$ are the lifts of $\chi'$ and $\psi'$ from $\mathbb{F}_p$ to $\mathbb{F}_q$, i.e. $\chi=\chi'\circ\mathrm{N}_{\mathbb{F}_q/\mathbb{F}_p}$ and $\psi=\psi'\circ\mathrm{Tr}_{\mathbb{F}_q/\mathbb{F}_p}$. Then
$$G(\chi,\psi)=(-1)^{k-1}G^k(\chi',\psi').$$
\end{lemma}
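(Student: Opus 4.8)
The plan is to prove the stronger statement that, for the lifts $\chi^{(n)}=\chi'\circ\mathrm{N}_{\mathbb{F}_{p^n}/\mathbb{F}_p}$ and $\psi^{(n)}=\psi'\circ\mathrm{Tr}_{\mathbb{F}_{p^n}/\mathbb{F}_p}$ to \emph{every} extension $\mathbb{F}_{p^n}/\mathbb{F}_p$, the Gauss sums $g_n:=G(\chi^{(n)},\psi^{(n)})$ satisfy $g_n=(-1)^{n-1}g_1^{\,n}$; taking $n=k$ then gives the lemma. (If one of $\chi',\psi'$ is trivial the identity is just $0=0$, because then both sides vanish by Lemma 2.1, so I may assume both are nontrivial.) The idea is to encode all the $g_n$ at once in a single power series and compute it in two ways.

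First I would set up an Euler product over $\mathbb{F}_p[x]$. For a monic $f=x^m-c_1(f)x^{m-1}+\cdots+(-1)^m c_m(f)$ not divisible by $x$, put $\lambda(f)=\psi'(c_1(f))\,\chi'(c_m(f))$, with $\lambda(1)=1$. Since $c_1(fg)=c_1(f)+c_1(g)$ and $c_{\deg fg}(fg)=c_{\deg f}(f)\,c_{\deg g}(g)$ — the sum and the product of the roots — $\lambda$ is completely multiplicative on monic polynomials prime to $x$. A coefficient count then gives $\sum_{\deg f=1}\lambda(f)=\sum_{a\in\mathbb{F}_p^{\ast}}\psi'(a)\chi'(a)=g_1$ (here $c_1$ and $c_m$ coincide), whereas for $m\ge2$ summing over the free coefficient $c_1$ produces the factor $\sum_{c_1\in\mathbb{F}_p}\psi'(c_1)=0$ (Lemma 2.1), so $\sum_{\deg f=m}\lambda(f)=0$. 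Hence the generating series $\Phi(t):=\sum_{f}\lambda(f)t^{\deg f}$ (sum over monic $f$ prime to $x$) is the \emph{polynomial} $\Phi(t)=1+g_1t$, while unique factorisation into monic irreducibles $P\ne x$ yields the formal identity $\Phi(t)=\prod_P(1-\lambda(P)t^{\deg P})^{-1}$.

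Next I would take logarithms and compare coefficients. From $\log\Phi(t)=\sum_P\sum_{j\ge1}j^{-1}\lambda(P)^jt^{j\deg P}$ one reads that the coefficient of $t^n$ equals $n^{-1}\sum_{d\mid n}d\sum_{\deg P=d}\lambda(P)^{n/d}$, and the crucial point is that this inner double sum is exactly $g_n$. To see it, use the transitivity of trace and norm: if $\alpha\in\mathbb{F}_{p^n}$ has minimal polynomial $P$ of degree $d$ over $\mathbb{F}_p$ (so $d\mid n$), then $c_1(P)=\mathrm{Tr}_{\mathbb{F}_{p^d}/\mathbb{F}_p}(\alpha)$, $c_d(P)=\mathrm{N}_{\mathbb{F}_{p^d}/\mathbb{F}_p}(\alpha)$, $\mathrm{Tr}_{\mathbb{F}_{p^n}/\mathbb{F}_p}(\alpha)=\tfrac nd\,\mathrm{Tr}_{\mathbb{F}_{p^d}/\mathbb{F}_p}(\alpha)$ and $\mathrm{N}_{\mathbb{F}_{p^n}/\mathbb{F}_p}(\alpha)=\mathrm{N}_{\mathbb{F}_{p^d}/\mathbb{F}_p}(\alpha)^{n/d}$, whence
\[
\lambda(P)^{n/d}=\psi'\!\Big(\tfrac nd\,\mathrm{Tr}_{\mathbb{F}_{p^d}/\mathbb{F}_p}(\alpha)\Big)\chi'\!\Big(\mathrm{N}_{\mathbb{F}_{p^d}/\mathbb{F}_p}(\alpha)^{n/d}\Big)=\psi^{(n)}(\alpha)\,\chi^{(n)}(\alpha).
\]
A monic irreducible $P$ of degree $d\mid n$ has $d$ distinct roots in $\mathbb{F}_{p^n}$, each element of $\mathbb{F}_{p^n}$ is a root of exactly one such $P$, and $\alpha=0$ contributes nothing since $\chi'(0)=0$; summing over $d\mid n$ and over $P$ therefore gives $\sum_{d\mid n}d\sum_{\deg P=d}\lambda(P)^{n/d}=\sum_{\alpha\in\mathbb{F}_{p^n}^{\ast}}\chi^{(n)}(\alpha)\psi^{(n)}(\alpha)=g_n$. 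Thus $[t^n]\log\Phi(t)=g_n/n$, while $[t^n]\log(1+g_1t)=(-1)^{n-1}g_1^{\,n}/n$; equating them gives $g_n=(-1)^{n-1}g_1^{\,n}$, and $n=k$ is the claim.

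The multiplicativity of $\lambda$ and the two coefficient counts are routine; the step I expect to be the real obstacle, and would write out carefully, is the identification in the last paragraph — matching the sum of $\lambda(P)^{n/d}$ over the irreducible factors of $x^{p^n}-x$ with the single Gauss sum $g_n$ over $\mathbb{F}_{p^n}$ — which rests on the transitivity formulas for trace and norm and on the bijection between $\mathbb{F}_{p^n}$ and the roots of the degree-$d$ irreducible polynomials over $\mathbb{F}_p$ for $d\mid n$.
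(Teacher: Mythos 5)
The paper gives no proof of this lemma at all (it is quoted from Lidl--Niederreiter \cite{LN}), so there is no internal argument to compare with; your proof is the classical Davenport--Hasse argument via the $L$-series $\prod_{P\neq x}\left(1-\lambda(P)t^{\deg P}\right)^{-1}$ over $\mathbb{F}_p[x]$, i.e.\ essentially the proof in the cited reference, and it is correct: the complete multiplicativity of $\lambda$, the vanishing of $\sum_{\deg f=m}\lambda(f)$ for $m\ge 2$, the resulting identity $\Phi(t)=1+g_1t$, and the trace/norm transitivity step identifying $\sum_{d\mid n}d\sum_{\deg P=d}\lambda(P)^{n/d}$ with $g_n$ are all sound. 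One small inaccuracy in your treatment of the degenerate case: if $\chi'$ is trivial and $\psi'$ nontrivial, the two sides are not $0$ but $-1$ and $(-1)^{k-1}(-1)^{k}=-1$ (by Lemma 2.1 the sum of $\psi'$ over $\mathbb{F}_p^{\ast}$ is $-1$, not $0$), so the identity holds but not for the reason you state; note, however, that your main argument only uses the nontriviality of $\psi'$, hence it already covers that case, and the only case genuinely needing a separate remark is $\psi'$ trivial, $\chi'$ nontrivial, where both sides do vanish.
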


\begin{lemma}[\cite{LN}]
Let $\mathbb{F}_q$ be the finite extension of $\mathbb{F}_p$. Then a multiplicative character $\chi$ of $\mathbb{F}_q$ can be lifted by a multiplicative character $\chi'$ of $\mathbb{F}_p$ if and only if $\chi^{p-1}$ is trivial.
\end{lemma}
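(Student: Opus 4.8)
The plan is to exploit the cyclic structure of the unit groups together with the surjectivity of the norm map. Recall from the excerpt that $N:=\mathrm{N}_{\mathbb{F}_q/\mathbb{F}_p}$ satisfies $N(\alpha)=\alpha^{(q-1)/(p-1)}$ for every $\alpha\in\mathbb{F}_q^{\ast}$, and that by a lift of $\chi'$ one means $\chi=\chi'\circ N$.

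The easy direction first. Suppose $\chi=\chi'\circ N$ for some multiplicative character $\chi'$ of $\mathbb{F}_p$. Since $\mathbb{F}_p^{\ast}$ is a group of order $p-1$, every one of its characters satisfies $(\chi')^{p-1}=\varepsilon$, the trivial character. Hence $\chi^{p-1}=(\chi')^{p-1}\circ N=\varepsilon$ on $\mathbb{F}_q^{\ast}$, so $\chi^{p-1}$ is trivial.

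For the converse, fix a generator $g$ of the cyclic group $\mathbb{F}_q^{\ast}$ (of order $q-1$). Then $N(g)=g^{(q-1)/(p-1)}$ has order exactly $p-1$, so $N(g)$ generates $\mathbb{F}_p^{\ast}$ and in particular $N$ is onto. Now assume $\chi^{p-1}=\varepsilon$. A character of $\mathbb{F}_q^{\ast}$ is determined by its value at $g$, and $\chi(g)^{p-1}=\chi^{p-1}(g)=1$, so $\zeta:=\chi(g)$ is a $(p-1)$-th root of unity. Define $\chi'$ on $\mathbb{F}_p^{\ast}$ by $\chi'\bigl(N(g)^{j}\bigr)=\zeta^{j}$; since $N(g)$ has order $p-1$ and $\zeta^{p-1}=1$, this is well defined and is a multiplicative character of $\mathbb{F}_p$. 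For any $x=g^{j}\in\mathbb{F}_q^{\ast}$ we then get $(\chi'\circ N)(g^{j})=\chi'\bigl(N(g)^{j}\bigr)=\zeta^{j}=\chi(g^{j})$, and both sides also agree (equal $0$) at $x=0$; thus $\chi=\chi'\circ N$, i.e. $\chi$ is a lift.

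One can repackage this as: the map $\chi'\mapsto\chi'\circ N$ is an injective homomorphism from the character group of $\mathbb{F}_p^{\ast}$ (order $p-1$) into that of $\mathbb{F}_q^{\ast}$ (cyclic of order $q-1$), injectivity being forced by surjectivity of $N$; its image is therefore the unique subgroup of order $p-1$, which is precisely $\{\chi:\chi^{p-1}=\varepsilon\}$. I do not expect any genuine obstacle in this argument; the only point that deserves explicit verification is that $N(g)$ has full order $p-1$ in $\mathbb{F}_p^{\ast}$ (equivalently, that $N$ is surjective), and this is immediate from $N(\alpha)=\alpha^{(q-1)/(p-1)}$ together with $\mathrm{ord}(g)=q-1$.
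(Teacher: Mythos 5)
Your proof is correct. Note that the paper gives no argument for this lemma at all---it is quoted directly from Lidl and Niederreiter \cite{LN}---so there is no internal proof to compare against; your argument (the easy direction from $(\chi')^{p-1}$ being trivial on $\mathbb{F}_p^{\ast}$, and the converse via the observation that $N(g)=g^{(q-1)/(p-1)}$ has exact order $p-1$ for a generator $g$ of $\mathbb{F}_q^{\ast}$, so that $\chi'$ can be defined on $\mathbb{F}_p^{\ast}$ by $\chi'(N(g)^{j})=\chi(g)^{j}$) is exactly the standard textbook proof. The closing repackaging, identifying the lifted characters with the unique subgroup of order $p-1$ of the (cyclic) character group of $\mathbb{F}_q^{\ast}$, is a clean and correct way to state the same thing.
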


Let $\chi_1,\chi_2,\cdots,\chi_s$ be nontrivial multiplicative characters of $\mathbb{F}_q$. The Jacobi sum in $\mathbb{F}_q$ is defined by
$$
J(\chi_1,\chi_2,\cdots,\chi_s)=\sum_{(x_1,x_2,\cdots,x_s)\in \mathbb{F}^s_q \atop x_1+x_2+\cdots+x_s=1}\chi_1(x_1)\chi_2(x_2)\cdots\chi_s(x_s).
$$
The following gives a relation between Gauss sun and Jacobi sum.

\begin{lemma}[\cite{LN}]
Let $\chi_1,\chi_2,\cdots,\chi_s$ be nontrivial multiplicative characters of $\mathbb{F}_q$ with the product $\chi_1\chi_2\cdots\chi_s$ is nontrivial. Let $\psi$ be a
nontrivial additive character of $\mathbb{F}_q$. Then
$$
J(\chi_1,\chi_2,\cdots,\chi_s)=\frac{G(\chi_1,\psi)\cdots G(\chi_s,\psi)}{G(\chi_1\cdots \chi_s,\psi)}.
$$
\end{lemma}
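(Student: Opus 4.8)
The plan is to expand the product of Gauss sums in the numerator directly from the definition and then reorganize the resulting multiple sum according to the value of $x_1+\cdots+x_s$. Writing each $G(\chi_i,\psi)=\sum_{x_i\in\mathbb{F}_q^\ast}\chi_i(x_i)\psi(x_i)$, the product becomes
$$\prod_{i=1}^s G(\chi_i,\psi)=\sum_{x_1,\dots,x_s\in\mathbb{F}_q^\ast}\chi_1(x_1)\cdots\chi_s(x_s)\,\psi(x_1+\cdots+x_s),$$
and I would split this sum into the part where $t:=x_1+\cdots+x_s=0$ and the part where $t\neq 0$.

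For the $t\neq0$ part, the key step is the substitution $x_i=ty_i$ with $y_1+\cdots+y_s=1$. Since each $\chi_i$ is multiplicative, this factors the character product as $(\chi_1\cdots\chi_s)(t)\,\chi_1(y_1)\cdots\chi_s(y_s)$, while $\psi(x_1+\cdots+x_s)=\psi(t)$. Summing first over the $y_i$ and then over $t\neq0$ decouples the two groups of variables and yields $J(\chi_1,\dots,\chi_s)\,G(\chi_1\cdots\chi_s,\psi)$. Here I would observe that, although the Jacobi sum is defined over all tuples with $y_1+\cdots+y_s=1$ and not merely over those with every $y_i\neq0$, any tuple having some $y_i=0$ contributes nothing because $\chi_i(0)=0$; hence the inner sum is exactly $J(\chi_1,\dots,\chi_s)$, and the outer sum over $t\neq0$ is exactly $G(\chi_1\cdots\chi_s,\psi)$, the product character being nontrivial and so vanishing at $0$.

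The main obstacle is showing that the $t=0$ part contributes nothing, and this is precisely where the hypothesis that $\chi_1\cdots\chi_s$ is nontrivial enters. Denoting that contribution by $S=\sum_{x_1+\cdots+x_s=0,\,x_i\neq0}\chi_1(x_1)\cdots\chi_s(x_s)$, I would exploit the scaling $(x_1,\dots,x_s)\mapsto(\lambda x_1,\dots,\lambda x_s)$ for $\lambda\in\mathbb{F}_q^\ast$, which preserves both the constraint $\sum x_i=0$ and the set of admissible tuples, while multiplying the summand by $(\chi_1\cdots\chi_s)(\lambda)$. This forces $S=(\chi_1\cdots\chi_s)(\lambda)\,S$ for every $\lambda\neq0$; choosing $\lambda$ with $(\chi_1\cdots\chi_s)(\lambda)\neq1$, which exists since the product character is nontrivial, gives $S=0$.

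Combining the two parts yields $\prod_{i=1}^s G(\chi_i,\psi)=J(\chi_1,\dots,\chi_s)\,G(\chi_1\cdots\chi_s,\psi)$. Finally, since $\chi_1\cdots\chi_s$ is nontrivial, Lemma 2.2 gives $|G(\chi_1\cdots\chi_s,\psi)|=\sqrt q\neq0$, so dividing through by $G(\chi_1\cdots\chi_s,\psi)$ produces the claimed identity. I would also remark that the final expression is manifestly independent of the choice of nontrivial additive character $\psi$, as the statement implicitly requires.
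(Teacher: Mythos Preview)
Your argument is correct and is the standard direct proof of this classical identity. Note, however, that the paper does not actually prove this lemma: it is stated with a citation to Lidl--Niederreiter and used as a black box, so there is no in-paper proof to compare against. Your approach---expanding the product of Gauss sums, splitting according to $t=\sum x_i$, scaling by $t$ on the $t\neq0$ part to extract $J(\chi_1,\dots,\chi_s)G(\chi_1\cdots\chi_s,\psi)$, and killing the $t=0$ part via the homogeneity trick $(x_i)\mapsto(\lambda x_i)$---is exactly the argument one finds in the reference (essentially Theorem~5.21 of \cite{LN}), and your justification that $G(\chi_1\cdots\chi_s,\psi)\neq0$ via Lemma~2.2 is the right way to close it.
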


\begin{lemma}[\cite{M2}]
Let $\mathbb{F}_q$ be the finite field of $q=p^k$ elements with the prime $p\equiv 1(\bmod 3)$, and $z$ is non-cubic in $\mathbb{F}^{\ast}_q$,
Then $S(1),S(z)$ and $S(z^2)$ are the roots of the cubic equation
$$x^3-3qx-qc=0,$$
where
$c$ is uniquely determined by
$$4p=c^2+27d^2,\ \ c\equiv 1(\mathrm{mod}3), \ \ (p,c)=1.$$
\end{lemma}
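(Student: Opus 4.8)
The plan is to reduce the statement to standard facts about cubic Gauss and Jacobi sums. Since $p\equiv 1\pmod 3$ we have $q\equiv 1\pmod 3$, so $\mathbb{F}_q$ carries a multiplicative character $\chi$ of order $3$; fix also a nontrivial additive character $\psi$. Using the identity $\#\{x\in\mathbb{F}_q:x^3=u\}=1+\chi(u)+\chi^2(u)$ together with Lemma 2.1, the first step is to show that for every $a\in\mathbb{F}_q^{\ast}$
$$S(a)=\sum_{u\in\mathbb{F}_q}\psi(au)\,\#\{x\in\mathbb{F}_q:x^3=u\}=\overline{\chi}(a)\,G(\chi,\psi)+\chi(a)\,G(\overline{\chi},\psi),$$
where the trivial-character term vanishes by Lemma 2.1 and the factor $\overline{\chi}(a)$ is produced by the substitution $u\mapsto a^{-1}u$. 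Write $G:=G(\chi,\psi)$ and $\overline{G}:=G(\overline{\chi},\psi)$; since $\chi(-1)=1$, Lemma 2.2 gives $G\overline{G}=q$ and $\overline{G}=\overline{G(\chi,\psi)}$, so in particular $|G|^{2}=q$.

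Next I would compute the elementary symmetric functions of $S(1),S(z),S(z^2)$. As $z$ is non-cubic, $\zeta:=\chi(z)$ is a primitive cube root of unity and $S(z^{j})=\overline{\zeta}^{\,j}G+\zeta^{j}\overline{G}$ for $j=0,1,2$. Expanding the products and repeatedly using $1+\zeta+\zeta^{2}=0$ gives
$$\sum_{j=0}^{2}S(z^{j})=0,\qquad \sum_{0\le i<j\le 2}S(z^{i})S(z^{j})=-3\,G\overline{G}=-3q,\qquad S(1)S(z)S(z^{2})=G^{3}+\overline{G}^{\,3}.$$
Therefore $S(1),S(z),S(z^{2})$ are exactly the roots of $x^{3}-3qx-(G^{3}+\overline{G}^{\,3})=0$, and it remains to identify $G^{3}+\overline{G}^{\,3}$ with $qc$.

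For this I would pass to the Jacobi sum. By Lemma 2.5, $J(\chi,\chi)=G(\chi,\psi)^{2}/G(\chi^{2},\psi)=G^{2}/\overline{G}=G^{3}/(G\overline{G})=G^{3}/q$, hence $G^{3}=q\,J(\chi,\chi)$, $\overline{G}^{\,3}=q\,\overline{J(\chi,\chi)}$, so $G^{3}+\overline{G}^{\,3}=2q\,\mathrm{Re}\,J(\chi,\chi)$ and $|J(\chi,\chi)|^{2}=q$. It remains to identify the integer $2\,\mathrm{Re}\,J(\chi,\chi)$ with $c$. Here I use the classical congruence for cubic Jacobi sums over a prime field, $J(\chi',\chi')\equiv -1\pmod 3$ in $\mathbb{Z}[\omega]$, where $\omega$ is a primitive cube root of unity and $\chi'$ a cubic character of $\mathbb{F}_p$; via Lemmas 2.3 and 2.4, which present $\chi$ as the lift of such a $\chi'$ and yield $J(\chi,\chi)=(-1)^{k-1}J(\chi',\chi')^{k}$, a short computation of signs modulo $3$ shows that $J(\chi,\chi)\equiv -1\pmod 3$ as well. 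Writing $J(\chi,\chi)=a+b\omega$ with $a,b\in\mathbb{Z}$, this congruence forces $a\equiv 2$ and $b\equiv 0\pmod 3$; setting $c:=2a-b$ and $b=3d$, the norm identity $a^{2}-ab+b^{2}=|J(\chi,\chi)|^{2}=q$ becomes $4q=(2a-b)^{2}+3b^{2}=c^{2}+27d^{2}$, while $c=2\,\mathrm{Re}\,J(\chi,\chi)\equiv 1\pmod 3$, and $(c,p)=1$ because $p$ splits in $\mathbb{Z}[\omega]$ and $J(\chi,\chi)$ is divisible by only one of the two primes above it. By the uniqueness of $(c,d)$ under these constraints, $c$ coincides with the integer in the statement and $G^{3}+\overline{G}^{\,3}=qc$, which finishes the proof.

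The main obstacle is the cubic Jacobi-sum congruence $J(\chi',\chi')\equiv -1\pmod 3$, that is, the divisibility of the $\sqrt{-3}$-coefficient of $J(\chi',\chi')$ by $3$; this is precisely what turns $c^{2}+3b^{2}$ into $c^{2}+27d^{2}$. The weaker congruence $J(\chi',\chi')\equiv p-2\equiv -1\pmod{1-\omega}$ is immediate from $J(\chi',\chi')=\sum_{t\ne 0,1}\chi'(t(1-t))$ by reducing each cube root of unity modulo $1-\omega$, but promoting it to modulus $3$ requires the standard finer analysis, via Stickelberger's congruence for Gauss sums or by evaluating $\chi'$ as a cubic power-residue symbol, which I would cite from the classical literature on cubic residues rather than reprove. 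Everything else is routine bookkeeping with character sums.
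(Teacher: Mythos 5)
Your proof is correct, but note that the paper itself does not prove this lemma at all: it is quoted from Myerson \cite{M2}, so there is no in-paper argument to match. What you have done is reconstruct a self-contained proof from the same toolkit the paper deploys for its Lemmas 2.8--2.10: your first two steps (the identity $S(a)=\overline{\chi}(a)G(\chi)+\chi(a)G(\overline{\chi})$ and the computation of the elementary symmetric functions of $S(1),S(z),S(z^2)$, giving $\sum=0$, $\sum_{i<j}=-3q$, product $=G^3+\overline{G}^3$) are exactly the paper's Lemma 2.9 and the symmetric-function bookkeeping it performs in the proof of Theorem 1.5, and your identification $G^3=qJ(\chi,\chi)$ is the last line of the paper's Lemma 2.8. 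Two remarks. First, the ``main obstacle'' you propose to cite from the classical literature, namely $J(\chi',\chi')\equiv -1\pmod 3$ in $\mathbb{Z}[\omega]$, is already available inside the paper: Lemma 2.7 (Theorem 3.1.3 of \cite{BEW}) gives $J(\chi',\chi')=\tfrac{c_0+3\sqrt{3}d_0\mathrm{i}}{2}$ with $4p=c_0^2+27d_0^2$, $c_0\equiv 1\pmod 3$, which is precisely the statement that the $\sqrt{-3}$-coefficient is divisible by $3$ in the required way; combining it with Hasse--Davenport and Lemma 2.5 (you need Lemma 2.5 as well as Lemmas 2.3--2.4 to pass from Gauss sums to $J(\chi,\chi)=(-1)^{k-1}J(\chi',\chi')^k$, as in equation (2.2) of the paper) yields $2\,\mathrm{Re}\,J(\chi,\chi)=c$ with $4q=c^2+27d^2$, $c\equiv 1\pmod 3$, $(c,p)=1$, exactly as you want, with no external reference needed. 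Second, the constant term you obtain is normalized by $4q=c^2+27d^2$; the ``$4p$'' in the quoted statement is evidently a misprint for $4q$ (compare (1.1) and the way Lemma 2.6 is used with $\delta=qc$, $c$ as in (1.3), in the proof of Theorem 1.5), so your proof establishes the intended statement. A small point of hygiene: uniqueness under the stated constraints holds for $c$ only ($d$ is determined up to sign), which is all the lemma requires, so your appeal to ``uniqueness of $(c,d)$'' should be phrased as uniqueness of $c$.
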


\begin{lemma}[Theorem 3.1.3 of \cite{BEW}]
Let $p\equiv 1(\bmod 3)$ and $\chi'$ be a multiplicative character of order 3 over $\mathbb{F}_p$. Then
$$J(\chi',\chi')=\frac{c_0+3\sqrt{3}d_0\mathrm{i}}{2},$$
where $c_0$ and $d_0$ are uniquely determined by
$$4p=c_0^2+27d_0^2,\ \ c_0\equiv 1(\mathrm{mod}3),\ \ 9d_0\equiv c_0(2g^{\frac{p-1}{3}}+1)(\bmod p)$$
with $g$ being the generator of the multiplicative group $\mathbb{F}^{\ast}_p$ of non-zero residues $(\bmod p)$ such that $\chi'(g)=\frac{-1+\sqrt{3}\mathrm{i}}{2}$.
\end{lemma}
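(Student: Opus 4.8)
The plan is to view $J(\chi',\chi')$ inside $\mathbb{Z}[\omega]$, where $\omega=e^{2\pi\mathrm i/3}=\tfrac{-1+\sqrt{-3}}{2}$, to fix its shape from one archimedean bound and one congruence modulo $3$, and finally to resolve the sign of $d_0$ by reducing the Jacobi sum modulo a carefully chosen prime of $\mathbb{Z}[\omega]$ above $p$.

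Since every value of $\chi'$ lies in $\mu_3\subset\mathbb{Z}[\omega]^{\times}$, the defining sum shows $J(\chi',\chi')\in\mathbb{Z}[\omega]$; write $J(\chi',\chi')=a+b\omega$ with $a,b\in\mathbb{Z}$. Because $\chi'^2=\overline{\chi'}$ is nontrivial and $\chi'(-1)=1$ (it is a cube root of unity of order dividing $2$), Lemma 2.5 combined with Lemma 2.2 gives $J(\chi',\chi')=G(\chi',\psi)^2/G(\overline{\chi'},\psi)=G(\chi',\psi)^3/p$ and $|J(\chi',\chi')|^2=p$, i.e.\ $a^2-ab+b^2=p$. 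For the congruence modulo $3$, I pass to $\mathbb{Z}[\omega,\zeta_p]$ and use that cubing is additive modulo $3$: from $G(\chi',\psi)=\sum_{t\neq0}\chi'(t)\zeta_p^{t}$ one gets $G(\chi',\psi)^3\equiv\sum_{t\neq0}\chi'(t)^3\zeta_p^{3t}=\sum_{s\neq0}\zeta_p^{s}=-1\pmod3$, hence $pJ(\chi',\chi')\equiv-1$, and since $p\equiv1\pmod3$ we conclude $J(\chi',\chi')\equiv-1\pmod3$ (the congruence descends to $\mathbb{Z}[\omega]$ because $3$ is unramified in $\mathbb{Q}(\zeta_p)$, a routine verification). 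Consequently $3\mid b$ and $3\mid a+1$; setting $c_0:=2a-b$ and $d_0:=b/3\in\mathbb{Z}$ yields $c_0^2+27d_0^2=4(a^2-ab+b^2)=4p$, $c_0\equiv2a\equiv1\pmod3$, and $a+b\omega=\tfrac12 c_0+\tfrac32 d_0\sqrt{-3}=\tfrac12\bigl(c_0+3\sqrt3\,d_0\mathrm i\bigr)$, which is the asserted form of $J(\chi',\chi')$.

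The sign of $d_0$ is the heart of the matter. Let $\mathfrak p$ be the prime of $\mathbb{Z}[\omega]$ over $p$ for which $\omega\equiv g^{(p-1)/3}\pmod{\mathfrak p}$ under the isomorphism $\mathbb{Z}[\omega]/\mathfrak p\cong\mathbb{F}_p$; since $\chi'(g)=\omega$ this forces $\chi'(t)\equiv t^{(p-1)/3}\pmod{\mathfrak p}$ for every $t\in\mathbb{F}_p^{\times}$. Reducing $J(\chi',\chi')=\sum_{t}\chi'(t)\chi'(1-t)$ modulo $\mathfrak p$ (the summands with $t=0$ or $t=1$ vanish because $(p-1)/3\ge1$) turns it into $\sum_{t\in\mathbb{F}_p}t^{(p-1)/3}(1-t)^{(p-1)/3}$; the polynomial $X^{(p-1)/3}(1-X)^{(p-1)/3}$ has all its monomials of degree between $(p-1)/3$ and $\tfrac{2(p-1)}{3}$, hence between $1$ and $p-2$, so the sum is $0$ in $\mathbb{F}_p$ by the elementary identity $\sum_{t\in\mathbb{F}_p}t^{m}=0$ for $1\le m\le p-2$. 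Therefore $a+bg^{(p-1)/3}\equiv0\pmod p$; substituting $b=3d_0$ gives $c_0=2a-b\equiv-3d_0\bigl(2g^{(p-1)/3}+1\bigr)\pmod p$, and multiplying through by $2g^{(p-1)/3}+1$ and invoking $(2u+1)^2=4u^2+4u+1\equiv-3\pmod p$ for the primitive cube root of unity $u=g^{(p-1)/3}$ (so $u^2+u+1\equiv0$) produces exactly $9d_0\equiv c_0\bigl(2g^{(p-1)/3}+1\bigr)\pmod p$. Uniqueness of $(c_0,d_0)$ is then routine: $4p=c_0^2+27d_0^2$ with $c_0\equiv1\pmod3$ determines the pair up to the sign of $d_0$ (unique factorization of $p$ in $\mathbb{Z}[\omega]$ together with the primary normalization), and the last congruence — whose right-hand side is a nonzero residue modulo $p$, since $p\nmid c_0$ and $2u+1\not\equiv0$ (otherwise $p\mid9$) — pins down that sign because $|d_0|\le\sqrt{4p/27}<p$.

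The principal obstacle is this final step: one must choose the prime $\mathfrak p$ above $p$ so that $\chi'$ reduces to the power character $t\mapsto t^{(p-1)/3}$ — precisely the choice that makes the reduced Jacobi sum a low-degree polynomial sum evaluating to $0$ — and then recognize the resulting linear relation $a+bg^{(p-1)/3}\equiv0\pmod p$ as equivalent, via $(2u+1)^2\equiv-3\pmod p$, to the normalization stated in the lemma. The remaining ingredients (the absolute-value identity, the congruence modulo $3$ and its descent to $\mathbb{Z}[\omega]$, and the uniqueness bookkeeping) are standard.
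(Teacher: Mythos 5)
Your argument is correct, but there is nothing in the paper to compare it with: Lemma 2.7 is quoted verbatim from Berndt--Evans--Williams (Theorem 3.1.3) and the paper offers no proof of it, so what you have produced is a self-contained derivation of an imported result. Your route is the classical one, and every step checks out: $J(\chi',\chi')=G(\chi',\psi)^3/p$ with $|J|^2=p$ (using $\chi'(-1)=1$ and Lemmas 2.2, 2.5); the cube-expansion congruence $G(\chi',\psi)^3\equiv-1\pmod 3$ giving $J\equiv-1\pmod 3$ in $\mathbb{Z}[\omega]$, whence the shape $\tfrac12\bigl(c_0+3\sqrt3\,d_0\mathrm{i}\bigr)$ with $4p=c_0^2+27d_0^2$ and $c_0\equiv1\pmod3$; and the sign of $d_0$ via reduction modulo the prime $\mathfrak p$ of $\mathbb{Z}[\omega]$ above $p$ at which $\omega\equiv g^{(p-1)/3}$, so that $\chi'$ becomes the power character, $J\equiv\sum_t t^{(p-1)/3}(1-t)^{(p-1)/3}\equiv0\pmod{\mathfrak p}$, and the resulting relation $a+3d_0g^{(p-1)/3}\equiv0\pmod p$ converts, via $(2u+1)^2\equiv-3\pmod p$, into $9d_0\equiv c_0(2g^{(p-1)/3}+1)\pmod p$, which pins the sign since the right-hand side is a nonzero residue. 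It is worth noting that this is exactly the mechanism the paper itself deploys one step later in proving Lemma 2.8 (choice of the prime $P$ with $g^{(q-1)/3}\equiv\omega\pmod P$, then $J\equiv0\pmod P$), except that there the key congruence $J(\chi_P,\chi_P)\equiv0\pmod P$ is again cited from BEW (Theorem 2.1.14), whereas you prove it directly from the elementary identity $\sum_{t\in\mathbb{F}_p}t^m=0$ for $1\le m\le p-2$; so your write-up buys full self-containedness at the cost of redoing standard Gauss--Jacobi sum bookkeeping. One cosmetic simplification: the descent of $J\equiv-1\pmod3$ from $\mathbb{Z}[\omega,\zeta_p]$ to $\mathbb{Z}[\omega]$ is most quickly seen by noting that $(J+1)/3$ is an algebraic integer lying in $\mathbb{Q}(\omega)$, hence in $\mathbb{Z}[\omega]$; the appeal to $3$ being unramified in $\mathbb{Q}(\zeta_p)$ works but is a detour.
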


In the rest of this paper, we let $\chi$ be a multiplicative character of order 3 of $\mathbb{F}_q$ and $\psi$ be the canonical additive character which is defined by
$$
\psi(x)=e^{2\pi\mathrm{i}\mathrm{Tr}_{\mathbb{F}_q/\mathbb{F}_p}(x)/p}.
$$
We denote $\overline{\chi}$ the conjugate character of $\chi$. For convenience, we let $G(\chi):=G(\chi,\psi)$. By Lemma 2.2, we have
$G(\chi)G(\overline{\chi})=\chi(-1)q=q$ and $|G(\chi)|=|G(\overline{\chi})|=\sqrt{q}$. We have the following three results for the Gauss sums of order 3.

\begin{lemma}
Let $\mathbb{F}_q$ be the finite field of $q=p^k$ elements with the prime $p\equiv 1(\bmod 3)$. If $z$ is non-cubic in $\mathbb{F}^{\ast}_q$,
then there is a unique multiplicative character $\chi$ of order 3 over $\mathbb{F}_q$ such that
$$\chi(z)=\omega,\ \ J(\chi,\chi)=\frac{c+3\sqrt{3}d\mathrm{i}}{2},\ \ G^3(\chi)=q\cdot\frac{c+3\sqrt{3}d\mathrm{i}}{2},$$
where $\omega=\frac{-1+\sqrt{3}\mathrm{i}}{2}$, $c$ and $d$ are uniquely determined by
\begin{align*}
4q=c^2+27d^2, c\equiv 1(\bmod 3), (c,p)=1, 9d\equiv c(2z^{\frac{q-1}{3}}+1) (\bmod p).
\end{align*}

\end{lemma}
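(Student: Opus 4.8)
The plan is to reduce everything to the order-3 character theory over the prime field $\mathbb{F}_p$ via the Hasse–Davenport lifting, and then match up the quantities coming out of Lemma 2.8 with the claimed formulas. First I would fix a multiplicative character $\chi'$ of order 3 on $\mathbb{F}_p$. There are exactly two such characters, $\chi'$ and $\overline{\chi'}$, and they differ only by the choice of primitive cube root of unity assigned to a generator $g$ of $\mathbb{F}_p^{\ast}$; I would normalize by Lemma 2.8 so that $\chi'(g)=\omega=\tfrac{-1+\sqrt{3}\mathrm{i}}{2}$, which pins down $J(\chi',\chi')=\tfrac{c_0+3\sqrt{3}d_0\mathrm{i}}{2}$ with $c_0,d_0$ as in that lemma. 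Lifting $\chi'$ to $\mathbb{F}_q$ by $\chi=\chi'\circ\mathrm{N}_{\mathbb{F}_q/\mathbb{F}_p}$ gives a multiplicative character of $\mathbb{F}_q$ which is nontrivial of order 3 (its order is 3 since $\chi^3=(\chi'^3)\circ\mathrm{N}$ is trivial and $\chi$ itself is not, as $q\equiv1\pmod 3$ forces the norm map to be surjective onto $\mathbb{F}_p^\ast$); by Lemma 2.5 every order-3 character of $\mathbb{F}_q$ arises this way, so there are exactly two of them, $\chi$ and $\overline{\chi}$.

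Next I would compute the three advertised quantities for this lifted $\chi$. For the Gauss sum, Lemma 2.3 (Hasse–Davenport) gives $G(\chi)=(-1)^{k-1}G^k(\chi',\psi')$, hence
\begin{align*}
G^3(\chi)=(-1)^{3(k-1)}G^{3k}(\chi',\psi')=\bigl(G^3(\chi',\psi')\bigr)^k.
\end{align*}
Using Lemma 2.6 with $s=2$, $G^3(\chi',\psi')=G(\chi',\psi')^2 G(\chi',\psi')=J(\chi',\chi')\,G(\chi'^2,\psi')\,G(\chi',\psi')=J(\chi',\chi')\cdot\chi'(-1)p=p\,J(\chi',\chi')$, since $\chi'(-1)=1$ as $\chi'$ has odd order. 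Therefore $G^3(\chi)=p^k\,J(\chi',\chi')^k=q\cdot J(\chi',\chi')^k$. Writing $J(\chi',\chi')=\sqrt p\,e^{\mathrm{i}\theta}$ (its modulus is $\sqrt p$ by Lemma 2.6 and Lemma 2.2), we get $J(\chi',\chi')^k=p^{k/2}e^{\mathrm{i}k\theta}$, and I would set $\tfrac{c+3\sqrt3 d\mathrm{i}}{2}:=J(\chi',\chi')^k/\sqrt q=p^{(k-1)/2}\cdot$(something) — more cleanly, I define $c,d$ by $\tfrac{c+3\sqrt3 d\mathrm{i}}{2}=q^{-1}G^3(\chi)=J(\chi',\chi')^k$, then $G^3(\chi)=q\cdot\tfrac{c+3\sqrt3 d\mathrm{i}}{2}$ holds by construction, and $J(\chi,\chi)=G(\chi)^2/G(\overline\chi)=G(\chi)^3/(G(\chi)G(\overline\chi))=G^3(\chi)/q=\tfrac{c+3\sqrt3 d\mathrm{i}}{2}$ follows from Lemma 2.6 and $G(\chi)G(\overline\chi)=q$. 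It remains to check that this $c+3\sqrt3 d\mathrm{i}$ has $c,d$ real integers satisfying $4q=c^2+27d^2$: the modulus relation $|J(\chi',\chi')^k|=p^{k/2}=\sqrt q$ gives $\tfrac14(c^2+27d^2)=q$, and integrality/the congruence $c\equiv1\pmod3$ come from the known integrality of Jacobi sums together with $J(\chi,\chi)\equiv -1\pmod{(1-\omega)^2}$ in $\mathbb{Z}[\omega]$ (the standard congruence), which forces $c\equiv1\pmod 3$.

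The main obstacle — and the real content of the lemma — is verifying the last congruence $9d\equiv c\,(2z^{(q-1)/3}+1)\pmod p$ together with the claim $\chi(z)=\omega$, and that these uniquely select $\chi$ out of the pair $\{\chi,\overline\chi\}$. Here I would argue as follows. Since $z$ is non-cubic in $\mathbb{F}_q^\ast$, $\chi(z)\in\{\omega,\overline\omega\}$; replacing $\chi$ by $\overline\chi$ swaps $\omega\leftrightarrow\overline\omega$ and simultaneously sends $d\mapsto -d$ (as $J(\overline\chi,\overline\chi)=\overline{J(\chi,\chi)}$), so exactly one of the two order-3 characters satisfies $\chi(z)=\omega$, and I choose that one; this proves existence and uniqueness of $\chi$. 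To get the congruence, note $\chi(z)=\chi'(\mathrm{N}_{\mathbb{F}_q/\mathbb{F}_p}(z))=\chi'\bigl(z^{(q-1)/(p-1)}\bigr)$, and $\chi'(y)\equiv y^{(p-1)/3}\pmod{\mathfrak p}$ for the prime $\mathfrak p$ above $p$ in $\mathbb{Z}[\omega]$; so $\chi(z)=\omega$ translates into $z^{(q-1)/3}\equiv\omega\pmod{\mathfrak p}$ (using $(q-1)/3 = \bigl((q-1)/(p-1)\bigr)\cdot(p-1)/3$). On the other hand the Jacobi-sum congruence $J(\chi,\chi)\equiv -1 \pmod 3$ refined modulo $\mathfrak p$ — concretely, reducing $\tfrac{c+3\sqrt3 d\mathrm{i}}{2}$ modulo $\mathfrak p$ and using $\sqrt{-3}=\sqrt3\mathrm{i}\equiv -(2\omega+1)$ — yields a linear relation between $c$, $9d$ and $\chi(z)$ modulo $p$; substituting $\chi(z)\equiv z^{(q-1)/3}$ produces exactly $9d\equiv c(2z^{(q-1)/3}+1)\pmod p$. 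For $k=1$ this is precisely Lemma 2.8 with $g$ replaced by $z$ (both being non-cubic, $\chi'(z)=\omega$ by our normalization), and for general $k$ it follows by raising to the $k$-th power and tracking the congruence through, which is the computation I expect to be the most delicate bookkeeping. Finally, uniqueness of the pair $(c,d)$: given $4q=c^2+27d^2$ with $c\equiv1\pmod3$ and $(c,p)=1$, the pair $(c,\pm d)$ is determined up to the sign of $d$, and the congruence mod $p$ fixes that sign (it cannot be satisfied by both $d$ and $-d$ since $9d\not\equiv0\pmod p$, as $(c,p)=1$ and $2z^{(q-1)/3}+1\not\equiv0$ because $z^{(q-1)/3}$ is a primitive cube root of unity mod $\mathfrak p$, not equal to the non-residue class $-\tfrac12$).
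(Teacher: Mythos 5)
Your overall route is the same as the paper's (lift an order-$3$ character from $\mathbb{F}_p$ via Hasse--Davenport, use the prime-field Jacobi sum evaluation, and pin down the sign of $d$ by reducing modulo a prime $\mathfrak p$ of $\mathbb{Z}[\omega]$ above $p$), but two points need repair. First, a sign slip: from $G(\chi)=(-1)^{k-1}G^k(\chi',\psi')$ you get $G^3(\chi)=(-1)^{3(k-1)}G^{3k}(\chi',\psi')=(-1)^{k-1}\bigl(G^3(\chi',\psi')\bigr)^k$, not $\bigl(G^3(\chi',\psi')\bigr)^k$; consequently $J(\chi,\chi)=(-1)^{k-1}J(\chi',\chi')^k$, and your defining line ``$\tfrac{c+3\sqrt3 d\mathrm{i}}{2}=q^{-1}G^3(\chi)=J(\chi',\chi')^k$'' is inconsistent for even $k$ (with the omitted sign one would get $c\equiv -1\pmod 3$, breaking the normalization the whole lemma hinges on). The paper keeps this $(-1)^{k-1}$ explicitly and checks $c\equiv 1\pmod 3$ from it. Relatedly, $\sqrt3\,\mathrm i=2\omega+1$ exactly, not $-(2\omega+1)$; carried through, your sign would flip the final congruence.

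Second, and more seriously, the step that is the actual content of the lemma --- $9d\equiv c\,(2z^{(q-1)/3}+1)\pmod p$ --- is not established. Reducing $\tfrac{c+3\sqrt3 d\mathrm{i}}{2}$ modulo $\mathfrak p$ only ``yields a linear relation between $c$, $9d$ and $\chi(z)$'' if you know what $J(\chi,\chi)$ is congruent to modulo $\mathfrak p$; the mod-$3$ congruence $J(\chi,\chi)\equiv-1$ gives no information there (the ideals $(3)$ and $\mathfrak p$ are coprime), so ``refining it modulo $\mathfrak p$'' is a non sequitur. The missing ingredient is the fact the paper takes from Berndt--Evans--Williams (Theorem 2.1.14): for the cubic residue character $\chi_P$ attached to $P=\mathfrak p$ one has $J(\chi_P,\chi_P)\equiv 0\pmod P$, and after identifying $\chi'$ with $\chi_P$ (i.e.\ choosing $\mathfrak p$ so that $\chi'(y)\equiv y^{(p-1)/3}\pmod{\mathfrak p}$, which also converts $\chi(z)=\omega$ into $z^{(q-1)/3}\equiv\omega\pmod{\mathfrak p}$) this gives $J(\chi,\chi)=(-1)^{k-1}J(\chi',\chi')^k\equiv 0\pmod{\mathfrak p}$; then $c+3d(2\omega+1)\equiv 0$, and multiplying by $2\omega+1$ with $(2\omega+1)^2=-3$ yields the stated congruence between rational integers, hence mod $p$. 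Your alternative fallback --- ``raise Lemma 2.7 to the $k$-th power and track the congruence through'' --- is exactly the delicate bookkeeping you acknowledge you have not done, and it is also where the $(-1)^{k-1}$ and the tie between the normalization $\chi'(g)=\omega$ on $\mathbb{F}_p^\ast$ and the condition $\chi(z)=\omega$ on $\mathbb{F}_q^\ast$ must be reconciled (the paper does this by choosing the generator $g$ of $\mathbb{F}_q^\ast$ with $\mathrm{ind}_g z\equiv 1\pmod 3$, so that $z^{(q-1)/3}=g^{(q-1)/3}$ and $g$ can be replaced by $z$ at the end). As written, existence/uniqueness of $\chi$ and the evaluation $G^3(\chi)=qJ(\chi,\chi)$ are fine, but the sign determination --- the heart of the lemma --- is still open in your proposal.
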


\begin{proof}
Let $g'$ be a generator of the multiplicative group $\mathbb{F}^{\ast}_q$. Note that $z$ is non-cubic. So we have $\mathrm{ind}_{g'}z\equiv \pm 1 (\bmod 3)$.
If $\mathrm{ind}_{g'}z\equiv 1 (\bmod 3)$, we take $g=g'$; If $\mathrm{ind}_{g'}z\equiv -1 (\bmod 3)$, we take $g=(g')^{-1}$.
Hence $g$ also a generator of the group $\mathbb{F}^{\ast}_q$ and $\mathrm{ind}_gz\equiv 1 (\bmod 3)$. Thus we have
\begin{align}\label{z}
z^{\frac{q-1}{3}}= \left(g^{\mathrm{ind}_gz}\right)^{\frac{q-1}{3}}= g^{\frac{q-1}{3}\mathrm{ind}_gz}= g^{\frac{q-1}{3}}.
\end{align}
We take the multiplicative character $\chi(\cdot)=e\left(\frac{\mathrm{ind}_g(\cdot)}{3}\right)$. Obviously, we have
\begin{align*}
\chi(z)=e\left(\frac{\mathrm{ind}_gz}{3}\right)=\chi(g)=e\left(\frac{1}{3}\right)=\omega.
\end{align*}

Since $p\equiv 1(\bmod 3)$, then $\chi^{p-1}$ is trivial. By Lemma 2.4,
the cubic multiplicative character $\chi$ can be lifted by a cubic multiplicative character $\chi'$ of $\mathbb{F}^{\ast}_p$.
It is easy to see that $\mathrm{N}_{\mathbb{F}_q/\mathbb{F}_p}(g)=g^{\frac{q-1}{p-1}}$ is a generator of $\mathbb{F}^{\ast}_p$ and
$$\chi(g)=\chi'(\mathrm{N}_{\mathbb{F}_q/\mathbb{F}_p}(g))=\omega.$$

By Lemma 2.7, we have
$$J(\chi',\chi')=\frac{c_0+3\sqrt{3}d_0\mathrm{i}}{2},$$
where $c_0$ and $d_0$ are uniquely determined by
$$4p=c_0^2+27d_0^2,\ \ c_0\equiv 1(\mathrm{mod}3),\ \ 9d_0\equiv c_0(2(N_{\mathbb{F}_q/\mathbb{F}_p}(g))^{\frac{p-1}{3}}+1)(\mathrm{mod}p).$$
By the Davenport-Hasse Theorem (Lemma 2.3) and Lemma 2.5, we have
\begin{align}\label{J}
J(\chi,\chi)&=(-1)^{k-1}J^k(\chi',\chi')\nonumber\\
&=(-1)^{k-1}\left(\frac{c_0+3\sqrt{3}d_0\mathrm{i}}{2}\right)^k:=\frac{c+3\sqrt{3}d\mathrm{i}}{2}.
\end{align}
So we have $4q=4p^k=c^2+27d^2$ and
\begin{align*}
c&=2\cdot(-1)^{k-1}\mathrm{Re}\left(\frac{c_0+3\sqrt{3}d_0\mathrm{i}}{2}\right)^k=2\cdot(-1)^{k-1}\mathrm{Re}\left(\frac{c_0+3d_0}{2}+3d_0\omega\right)^k\\
&\equiv (-1)^k\left(\frac{c_0+3d_0}{2}\right)^k\equiv c_0^k \equiv 1 (\bmod 3).
\end{align*}

Let $K=\mathbb{Q}(\omega)$. Note that $p\equiv 1(\bmod 3)$. By the prime ideal decomposition of cubic cyclotomic field $K=\mathbb{Q}(\omega)$, we have
\begin{align}\label{p}
pO_K=\left(\frac{c_0+3\sqrt{3}d_0\mathrm{i}}{2}\right)O_K\cdot\left(\frac{c_0-3\sqrt{3}d_0\mathrm{i}}{2}\right)O_K:=P_1P_2.
\end{align}
Thus in $K=\mathbb{Q}(\omega)$, we have the unique decomposition
$$q=\left(\frac{c+3\sqrt{3}d\mathrm{i}}{2}\right)\cdot\left(\frac{c-3\sqrt{3}d\mathrm{i}}{2}\right)
=\left(\frac{c_0+3\sqrt{3}d_0\mathrm{i}}{2}\right)^k\cdot\left(\frac{c_0-3\sqrt{3}d_0\mathrm{i}}{2}\right)^k.$$
Then $c$ is uniquely determined by $4q=c^2+27d^2, c\equiv 1(\bmod 3), (c,p)=1$.

Now we begin to determine the sign of $d$.
Note that $O_K/P_j$ is isomorphic to $\mathbb{F}_p$ for $j=1,2$ and $\mathrm{N}_{\mathbb{F}_q/\mathbb{F}_p}(g)+P_j$ is a generator of $(O_K/P_j)^{\ast}$.
$\left(N_{\mathbb{F}_q/\mathbb{F}_p}(g)\right)^{\frac{p-1}{3}}+P_j$ is a cubic root of unity in $O_K/P_j$.
Then there is one of the prime ideals $P_1$ and $P_2$ (rewrite it as $P$), satisfying
$$\left(N_{\mathbb{F}_q/\mathbb{F}_p}(g)\right)^{\frac{p-1}{3}}\equiv \omega (\bmod P).$$
Thus we have
\begin{align}\label{P}
g^{\frac{q-1}{3}}\equiv \omega (\bmod P).
\end{align}
Define the multiplicative character $\chi_P$ on $(O_K/P)^{\ast}$ by
$$\chi_P(N_{\mathbb{F}_q/\mathbb{F}_p}(g)+P)=\omega.$$
Thus we view $\chi'$ as the character $\chi_P$ on the finite field $O_K/P$ by identifying the generator
$$\chi'(N_{\mathbb{F}_q/\mathbb{F}_p}(g))=\chi_P(N_{\mathbb{F}_q/\mathbb{F}_p}(g)+P)=\omega.$$
Then we have $J(\chi',\chi')=J(\chi_P,\chi_P)$. By Theorem 2.1.14 of \cite{BEW}, we have $J(\chi_P,\chi_P)\equiv 0 (\bmod P)$.
Thus we have
$$J(\chi',\chi')\equiv 0 (\bmod P).$$
So by (\ref{J}), we have
$$J(\chi,\chi)=\frac{c+3\sqrt{3}d\mathrm{i}}{2}=\frac{c+3d(2\omega+1)}{2}\equiv 0 (\bmod P)$$
Then $3d(2\omega+1)\equiv -c(\bmod P)$. Multiplying $-(2\omega+1)$, by (\ref{P}), we have
$$9d\equiv-3d(2\omega+1)^2\equiv c(2\omega+1)\equiv c(2g^{\frac{q-1}{3}}+1) (\bmod P).$$
Hence by (\ref{z}) and (\ref{p}), we have
$$9d\equiv c(2g^{\frac{q-1}{3}}+1)\equiv c(2z^{\frac{q-1}{3}}+1) (\bmod p).$$
Since $\chi$ is a multiplicative character of order 3, by Lemma 2.5, we have
$$G^3(\chi)=J(\chi,\chi)G(\chi^2)G(\chi)=J(\chi,\chi)G(\overline{\chi})G(\chi)=qJ(\chi,\chi).$$
This completes the proof of Lemma 2.8.
\end{proof}

\begin{lemma}
Let $\chi$ be a multiplicative character of order 3 of $\mathbb{F}_q$. Then for any $a\in \mathbb{F}^{\ast}_q$, we have
\begin{align}\label{S(a)}
S(a)=\overline{\chi}(a)G(\chi)+\chi(a)G(\overline{\chi}).
\end{align}
\end{lemma}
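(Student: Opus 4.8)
The plan is the standard Gauss-sum manipulation: rewrite $S(a)=\sum_{x\in\mathbb{F}_q}\psi(ax^3)$ by pushing the sum forward along the map $x\mapsto x^3$ and recording the multiplicities via the cubic character. Since $q\equiv 1(\bmod 3)$, for each $y\in\mathbb{F}^{\ast}_q$ the number of $x\in\mathbb{F}_q$ with $x^3=y$ equals $1+\chi(y)+\chi^2(y)=1+\chi(y)+\overline{\chi}(y)$ (this is the only input that uses $\mathrm{ord}(\chi)=3$; the sum of the three characters trivial on the cubes detects exactly the cubic residues, each with multiplicity $3$, and vanishes otherwise), while $y=0$ has the single preimage $x=0$. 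Hence
\begin{align*}
S(a)=1+\sum_{y\in\mathbb{F}^{\ast}_q}\bigl(1+\chi(y)+\overline{\chi}(y)\bigr)\psi(ay)
=\sum_{y\in\mathbb{F}_q}\psi(ay)+\sum_{y\in\mathbb{F}^{\ast}_q}\chi(y)\psi(ay)+\sum_{y\in\mathbb{F}^{\ast}_q}\overline{\chi}(y)\psi(ay).
\end{align*}

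Next I would dispatch the three pieces. The first sum is $\sum_{y\in\mathbb{F}_q}\psi(ay)=0$ by Lemma 2.1, because $a\neq 0$. For the second, substitute $y=a^{-1}t$ (a bijection of $\mathbb{F}^{\ast}_q$ onto itself) to get $\sum_{t\in\mathbb{F}^{\ast}_q}\chi(a^{-1}t)\psi(t)=\chi(a^{-1})G(\chi)$, and since $\chi$ takes values of modulus $1$ we have $\chi(a^{-1})=\chi(a)^{-1}=\overline{\chi}(a)$, so this term is $\overline{\chi}(a)G(\chi)$. The same substitution applied to the third sum yields $\overline{\chi}(a^{-1})G(\overline{\chi})=\chi(a)G(\overline{\chi})$. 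Adding the three contributions gives exactly $S(a)=\overline{\chi}(a)G(\chi)+\chi(a)G(\overline{\chi})$, which is \eqref{S(a)}.

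There is no real obstacle here: the argument is entirely formal once the counting identity for cube roots is in hand. The only point needing a line of justification is that identity, and one should also note for completeness that $G(\chi)$ and $G(\overline{\chi})$ are genuine Gauss sums with a nontrivial additive character $\psi$ (so Lemma 2.2 applies), which is automatic since $p$, hence $q$, is odd and $\psi$ is the canonical character. If one prefers to avoid the case split on $y=0$, an equivalent route is to write $S(a)-1=\sum_{y\in\mathbb{F}^{\ast}_q}N(y)\psi(ay)$ with $N(y)=\#\{x:x^3=y\}$ and proceed identically; I expect the version above to be the cleanest to present.
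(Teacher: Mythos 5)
Your argument is correct and is essentially the paper's own proof: both count cube roots via $1+\chi(y)+\overline{\chi}(y)$, kill the full additive-character sum using Lemma 2.1 (since $a\neq 0$), and pull out $\overline{\chi}(a)$ and $\chi(a)$ by the substitution $y\mapsto a^{-1}t$ to recognize $G(\chi)$ and $G(\overline{\chi})$. No substantive difference to report.
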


\begin{proof}
Note that $\chi$ be the multiplicative character of order 3. Then we have
$$1+\chi(k)+\overline{\chi}(k)=\left\{
                        \begin{array}{ll}
                          3, & \hbox{if $k$ is cubic;} \\
                          0, & \hbox{if $k$ is non-cubic.}
                        \end{array}
                      \right.
$$
Thus for any $a\in \mathbb{F}^{\ast}_q$, we have
\begin{align*}
S(a)=\sum_{k\in \mathbb{F}^{\ast}_q}\psi(ak^3)
&=1+\sum_{k\in \mathbb{F}^{\ast}_q}(1+\chi(k)+\overline{\chi}(k))\psi(ak)\\
&=1+\sum_{k\in \mathbb{F}^{\ast}_q}\psi(ak)+\sum_{k\in \mathbb{F}^{\ast}_q}\chi(k)\psi(ak)+\sum_{k\in \mathbb{F}^{\ast}_q}\overline{\chi}(k)\psi(ak)\\
&=\overline{\chi}(a)\sum_{k\in \mathbb{F}^{\ast}_q}\chi(ak)\psi(ak)+\chi(a)\sum_{k\in \mathbb{F}^{\ast}_q}\overline{\chi}(ak)\psi(ak)\\
&=\overline{\chi}(a)G(\chi)+\chi(a)G(\overline{\chi}).
\end{align*}
\end{proof}

\begin{lemma}
Let $\mathbb{F}_q$ be the finite field of $q=p^k$ elements with the prime $p\equiv 1(\bmod 3)$. If $z$ is non-cubic in $\mathbb{F}^{\ast}_q$,
then
$$
S(1)^2S(z)+S(z)^2S(z^2)+S(z^2)^2S(1)=\frac{3}{2}q(9d-c),
$$
where $c$ and $d$ are uniquely determined by (\ref{cd}).
\end{lemma}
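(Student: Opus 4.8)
The plan is to push everything down to Gauss sums of order $3$ via Lemma 2.9 and then to carry out a short symmetric computation in the two conjugate sums $u:=G(\chi)$ and $v:=G(\overline{\chi})$. Since $z$ is non-cubic, Lemma 2.8 provides a multiplicative character $\chi$ of order $3$ with $\chi(z)=\omega$, $J(\chi,\chi)=\frac{c+3\sqrt{3}d\mathrm{i}}{2}$ and $G^{3}(\chi)=q\cdot\frac{c+3\sqrt{3}d\mathrm{i}}{2}$, where $c$ and $d$ are as in (\ref{cd}). Because $\chi$ has order $3$ we have $\chi(-1)=1$, so Lemma 2.2 gives $uv=q$, whence $u^{3}v^{3}=q^{3}$; combining this with $u^{3}=q\cdot\frac{c+3\sqrt{3}d\mathrm{i}}{2}$ and $4q=c^{2}+27d^{2}$ forces $v^{3}=q\cdot\frac{c-3\sqrt{3}d\mathrm{i}}{2}$. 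I will also repeatedly use $u^{2}v=qu$ and $uv^{2}=qv$.

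Next, by (\ref{S(a)}) together with $\chi(z)=\omega$, $\overline{\chi}(z)=\omega^{2}$, $\chi(z^{2})=\omega^{2}$, $\overline{\chi}(z^{2})=\omega$ and $\omega^{3}=1$, one has
$$S(1)=u+v,\qquad S(z)=\omega^{2}u+\omega v,\qquad S(z^{2})=\omega u+\omega^{2}v.$$
I would substitute these three expressions into $S(1)^{2}S(z)+S(z)^{2}S(z^{2})+S(z^{2})^{2}S(1)$ and expand. Every resulting degree-$3$ monomial in $u,v$ is of the shape $u^{3}$, $v^{3}$, $u^{2}v=qu$, or $uv^{2}=qv$; reducing the exponents of $\omega$ modulo $3$, the $u^{3}$-terms sum to $3\omega^{2}u^{3}$, the $v^{3}$-terms to $3\omega v^{3}$, while each of the $qu$- and $qv$-contributions carries an $\omega$-coefficient that is a nonzero multiple of $1+\omega+\omega^{2}=0$ and so vanishes. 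Hence the sum equals $3\bigl(\omega^{2}u^{3}+\omega v^{3}\bigr)$.

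Finally I would evaluate $\omega^{2}u^{3}+\omega v^{3}$ by inserting $u^{3}=q\cdot\frac{c+3\sqrt{3}d\mathrm{i}}{2}$, $v^{3}=q\cdot\frac{c-3\sqrt{3}d\mathrm{i}}{2}$ and $\omega=\frac{-1+\sqrt{3}\mathrm{i}}{2}$, $\omega^{2}=\frac{-1-\sqrt{3}\mathrm{i}}{2}$: the imaginary parts cancel and one obtains $\omega^{2}u^{3}+\omega v^{3}=\frac{q}{2}(9d-c)$, so the whole expression is $\frac{3}{2}q(9d-c)$, as claimed. I expect the only delicate point to be the bookkeeping in the expansion, namely verifying that after applying $\omega^{3}=1$ the cubic terms really collapse to $3\omega^{2}u^{3}+3\omega v^{3}$ and that every $qu$- and $qv$-term is killed by $1+\omega+\omega^{2}=0$; once this is checked the remainder is a routine computation.
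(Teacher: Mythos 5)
Your proposal is correct, and I checked the expansion: with $u=G(\chi)$, $v=G(\overline{\chi})$, the sum $S(1)^2S(z)+S(z)^2S(z^2)+S(z^2)^2S(1)$ does collapse to $3\left(\omega^{2}u^{3}+\omega v^{3}\right)$, since the coefficients of $u^{2}v$ and $uv^{2}$ are each $3(1+\omega+\omega^{2})=0$, and the final evaluation gives $\tfrac{3}{2}q(9d-c)$. Your route differs from the paper's in its bookkeeping mechanism: the paper never expands the three-term sum directly, but instead introduces the auxiliary sum $\sum_{a\in\mathbb{F}^{\ast}_q}S(a)^{2}S(az)$, observes via the coset decomposition $\mathbb{F}^{\ast}_q=H\cup zH\cup z^{2}H$ that this equals $\tfrac{1}{3}(q-1)$ times the desired quantity, and then kills the cross terms by character orthogonality (Lemma 2.1) over the full sum, arriving at $(q-1)\left(\overline{\chi}(z)G^{3}(\chi)+\chi(z)G^{3}(\overline{\chi})\right)$ before invoking Lemma 2.8. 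You work directly with $S(1),S(z),S(z^{2})$ expressed through $u$, $v$, $\omega$, kill the cross terms with $1+\omega+\omega^{2}=0$, and obtain $G^{3}(\overline{\chi})$ from $uv=q$ together with $4q=c^{2}+27d^{2}$ rather than by complex conjugation; this is more elementary and self-contained. What the paper's averaging device buys is that the same coset-partition computation is reused essentially verbatim in the proof of Theorem 1.5, whereas your argument is a one-off identity; both rest on the same inputs (Lemmas 2.8 and 2.9) and reach the same evaluation, so either is acceptable.
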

\begin{proof}
Since $p\equiv 1(\bmod 3)$, the non-zero cubic elements form a multiplicative subgroup $H$ of order $\frac{1}{3}(q-1)$ and index 3 which partitions $\mathbb{F}^{\ast}_q$ into
three cosets $H, zH$ and $z^2H$. Then for any $a\in z^jH$, we have $S(a)=S(z^j)$ and $S(az)=S(z^{j+1})$. Thus we have
\begin{align}\label{2.10-1}
\sum_{a\in\mathbb{F}^{\ast}_q}S(a)^2S(az)&=\sum_{a\in H}S(a)^2S(az)+\sum_{a\in zH}S(a)^2S(az)+\sum_{a\in zH}S(a)^2S(az)\nonumber\\
&=\frac{1}{3}(q-1)\left(S(1)^2S(z)+S(z)^2S(z^2)+S(z^2)^2S(1)\right).
\end{align}
On the other hand, by Lemma 2.8,
there is a unique multiplicative character $\chi$ of order 3 over $\mathbb{F}_q$ such that
$$\chi(z)=\frac{-1+\sqrt{3}\mathrm{i}}{2},\ \ G^3(\chi)=q\cdot\frac{c+3\sqrt{3}d\mathrm{i}}{2},$$
where $c$ and $d$ are uniquely determined by (\ref{cd}).
By Lemmas 2.1 and 2.9, we have
\begin{align}\label{2.10-2}
&\sum_{a\in\mathbb{F}^{\ast}_q}S(a)^2S(az)\nonumber\\
&=\sum_{a\in\mathbb{F}^{\ast}_q}(\overline{\chi}(a)G(\chi)+\chi(a)G(\overline{\chi}))^2(\overline{\chi}(az)G(\chi)+\chi(az)G(\overline{\chi}))\nonumber\\
&=(q-1)\left(\overline{\chi}(z)G^3(\chi)+\chi(z)G^3(\overline{\chi})\right)\nonumber\\
&=q(q-1)\left(\frac{-1-\sqrt{3}\mathrm{i}}{2}\cdot\frac{c+3\sqrt{3}d\mathrm{i}}{2}+\frac{-1+\sqrt{3}\mathrm{i}}{2}\cdot\frac{c-3\sqrt{3}d\mathrm{i}}{2}\right)\nonumber\\
&=\frac{1}{2}q(q-1)(9d-c).
\end{align}
Then Lemma 2.10 immediately follows from (\ref{2.10-1}) and (\ref{2.10-2}).

\end{proof}

\section{Proofs of Theorems 1.4 and 1.5}
In this section, we prove Theorem 1.4 and 1.5. First, we begin with the proof of Theorem 1.5.

{\it Proof of Theorem 1.5}.
By Remark 1.6, we only need to consider the case $q\equiv 1(\bmod 3)$ with $p\equiv 1(\bmod 3)$.
By Lemma 2.1, we have
\begin{align*}
B_n(z)&=\frac{1}{q}\sum_{a\in \mathbb{F}_q}\sum_{(x_1,x_2,\cdots,x_{n+1})\in \mathbb{F}^{s+1}_q}\psi\left(a(x_1^3+\cdots+x_n^3+zx_{n+1}^3)\right)\\
&=q^n+\frac{1}{q}\sum_{a\in \mathbb{F}^{\ast}_q}(S(a))^nS(az).
\end{align*}
Then
\begin{align*}
\sum_{n=0}^{\infty}B_n(z)x^n
&=\sum_{n=0}^{\infty}q^nx^n+\frac{1}{q}\sum_{a\in \mathbb{F}^{\ast}_q}S(az)\sum_{n=0}^{\infty}(S(a))^nx^n\\
&=\frac{1}{1-qx}+\frac{1}{q}\sum_{a\in \mathbb{F}^{\ast}_q}\frac{S(az)}{1-S(a)x}.
\end{align*}
Since $p\equiv 1(\bmod 3)$, the non-zero cubic elements form a multiplicative subgroup $H$ of order $\frac{1}{3}(q-1)$ and index 3. Then by the proof of Lemma 2.10, we have
\begin{align*}
&\sum_{n=0}^{\infty}B_n(z)x^n\\
&=\frac{1}{1-qx}+\frac{1}{q}\left(\sum_{a\in H}\frac{S(az)}{1-S(a)x}+\sum_{a\in zH}\frac{S(az)}{1-S(a)x}+\sum_{a\in z^2H}\frac{S(az)}{1-S(a)x}\right)\\
&=\frac{1}{1-qx}+\frac{q-1}{3q}\left(\frac{S(z)}{1-S(1)x}+\frac{S(z^2)}{1-S(z)x}+\frac{S(1)}{1-S(z^2)x}\right)\\
&=\frac{1}{1-qx}+\frac{q-1}{3q}\cdot\frac{\alpha-(\alpha^2-\beta)x+\gamma x^2}{1-\alpha x+\beta x^2-\delta x^3},
\end{align*}
Where $\alpha=S(1)+S(z)+S(z^2)$, $\beta=S(1)S(z)+S(z)S(z^2)+S(z^2)S(1)$, $\gamma=S(1)^2S(z)+S(z)^2S(z^2)+S(z^2)^2S(1)$ and $\delta=S(1)S(z)S(z^2)$.
By Lemmas 2.6 and 2.10, we have
$$\alpha=0,\beta=-3q,\gamma=\frac{3}{2}q(9d-c),\delta=qc.$$
Thus we have
\begin{align*}
\sum_{n=0}^{\infty}B_n(z)x^n&=\frac{1}{1-qx}+\frac{q-1}{3q}\cdot\frac{-3qx+\frac{3}{2}q(9d-c)x^2}{1-3qx^2-qcx^3}\\
&=\frac{1}{1-qx}-\frac{(q-1)x+\frac{1}{2}(q-1)(c-9d)x^2}{1-3qx^2-qcx^3}.
\end{align*}
This completes the proof of the Theorem 1.5.

{\it Proof of Theorem 1.4}.
By the proof of Theorem 1.3 in \cite{HZ}, it is easy to see that
$$B_n(z)=A_n(0)+(q-1)A_n(z).$$
Thus we have
$$A_n(z)=\frac{1}{(q-1)}(B_n(z)-A_n(0)).$$
If $z$ is non-cubic, then by Theorems 1.1 and 1.5, we have
\begin{align*}
\sum_{n=1}^{\infty}A_n(z)x^n
&=\frac{1}{(q-1)}\left(\sum_{n=1}^{\infty}B_n(z)x^n-\sum_{n=1}^{\infty}A_n(0)x^n\right)\\
&=\frac{1}{(q-1)}\left(\frac{1}{1-qx}-\frac{(q-1)x+\frac{1}{2}(q-1)(c-9d)x^2}{1-3qx^2-qcx^3}-B_0(z)\right)\\
&\ \ \ \ -\frac{1}{(q-1)}\left(\frac{x}{1-qx}+\frac{(q-1)(2+cx)x^2}{1-3qx^2-qcx^3}\right)\\
&=\frac{x}{1-qx}-\frac{x+\frac{1}{2}(4+c-9d)x^2+cx^3}{1-3qx^2-qcx^3}.
\end{align*}

If $z$ is cubic, we have $B_n(z)=A_{n+1}(0)$. By Theorem 1.1, we have
\begin{align*}
\sum_{n=1}^{\infty}A_n(z)x^n
&=\frac{1}{(q-1)}\left(\sum_{n=1}^{\infty}A_{n+1}(0)x^n-\sum_{n=1}^{\infty}A_n(0)x^n\right)\\
&=\frac{1}{(q-1)}\left(\frac{1}{x}\sum_{n=1}^{\infty}A_n(0)x^n-A_1(0)-\sum_{n=1}^{\infty}A_n(0)x^n\right)\\
&=\frac{1}{(q-1)}\left(\frac{1-x}{x}\sum_{n=1}^{\infty}A_n(0)x^n-1\right)\\
&=\frac{x}{1-qx}+\frac{2x+(c-2)x^2-cx^3}{1-3qx^2-qcx^3}.
\end{align*}
This completes the proof of the Theorem 1.4.

\section{Proofs of Theorems 1.9 and 1.10 and an example}
In this section, we prove Theorem 1.9 and 1.10. First, we begin with the proof of Theorem 1.9.

{\it Proof of Theorem 1.9}.

By Lemma 2.1, we have
\begin{align*}
M_k(a_1,a_2,a_3)&=\frac{1}{q}\sum_{m\in \mathbb{F}_q}\sum_{(x_1,x_2,x_3)\in \mathbb{F}^3_q}\psi\left(m(a_1x_1^3+a_2x_2^3+a_3x_3^3)\right)\\
&=q^2+\frac{1}{q}\sum_{m\in \mathbb{F}^{\ast}_q}S(a_1m)S(a_2m)S(a_3m).
\end{align*}

Then by Lemma 2.9, for any multiplicative character $\chi$ of order 3, we have
\begin{align*}
M_k(a_1,a_2,a_3)&=q^2+\frac{1}{q}\sum_{m\in \mathbb{F}^{\ast}_q}\left[\prod_{j=1}^3\left(\overline{\chi}(ma_j)G(\chi)+\chi(ma_j)G(\overline{\chi})\right)\right]\\
&=q^2+\frac{1}{q}\sum_{m\in \mathbb{F}^{\ast}_q}\left[\overline{\chi}(a_1a_2a_3)G^3(\chi)+\chi(a_1a_2a_3)G^3(\overline{\chi})\right]\\
&+G(\chi)(\chi(a^{-1}_1a^{-1}_2a_3)+\chi(a^{-1}_1a_2a^{-1}_3)+\chi(a_1a^{-1}_2a^{-1}_3))\sum_{m\in \mathbb{F}^{\ast}_q}\overline{\chi}(m)\\
&+G(\overline{\chi})(\chi(a^{-1}_1a_2a_3)+\chi(a_1a^{-1}_2a_3)+\chi(a_1a_2a^{-1}_3))\sum_{m\in \mathbb{F}^{\ast}_q}\chi(m)\\
&=q^2+\frac{q-1}{q}\left[\overline{\chi}(a_1a_2a_3)G^3(\chi)+\chi(a_1a_2a_3)G^3(\overline{\chi})\right].
\end{align*}
If $a_1a_2a_3$ is cubic, thus we have $\chi(a_1a_2a_3)=\overline{\chi}(a_1a_2a_3)=1$. then by Lemma 2.8, we have
\begin{align*}
M_k(a_1,a_2,a_3)&=q^2+\frac{q-1}{q}(G^3(\chi)+G^3(\overline{\chi}))\\
&=q^2+(q-1)\left[\frac{c+3\sqrt{3}d\mathrm{i}}{2}+\frac{c-3\sqrt{3}d\mathrm{i}}{2}\right]\\
&=q^2+c(q-1).
\end{align*}
If $a_1a_2a_3$ is non-cubic, then by Lemma 2.8, we can take multiplicative character $\chi$ of order 3 satisfying
$$\chi(a_1a_2a_3)=\frac{-1+\sqrt{3}\mathrm{i}}{2},\ \ G^3(\chi)=q\cdot\frac{c+3\sqrt{3}d\mathrm{i}}{2},$$
where $c$ and $d$ are uniquely determined by (\ref{cd for a}).
Thus we have
\begin{align*}
M_k(a_1,a_2,a_3)&=q^2+(q-1)\left(\frac{-1-\sqrt{3}\mathrm{i}}{2}\cdot\frac{c+3\sqrt{3}d\mathrm{i}}{2}+\frac{-1+\sqrt{3}\mathrm{i}}{2}\cdot\frac{c-3\sqrt{3}d\mathrm{i}}{2}\right)\\
&=q^2+\frac{1}{2}(q-1)(9d-c).
\end{align*}

This completes the proof of the Theorem 1.9.

{\it Proof of Theorem 1.10}. We have
\begin{align*}
M_k(a_1,a_2,a_3)&=\sum_{(x_1,x_2,x_3)\in \mathbb{F}^3_q \atop a_1x_1^3+a_2x_2^3+a_3x_3^3=0}1\\
&=\sum_{(x_1,x_2)\in \mathbb{F}^2_q, x_3\in \mathbb{F}^{\ast}_q\atop a_1x_1^3+a_2x_2^3+a_3x_3^3=0}1
+\sum_{(x_1,x_2)\in \mathbb{F}^2_q \atop a_1x_1^3+a_2x_2^3=0}1\\
&=\sum_{(x_1,x_2)\in \mathbb{F}^2_q, x_3\in \mathbb{F}^{\ast}_q \atop a_1(-x_1x^{-1}_3)^3+a_2(-x_2x^{-1}_3)^3=a_3}1+
\sum_{x_1\in \mathbb{F}^{\ast}_q,x_2\in \mathbb{F}_q \atop a_1x_1^3+a_2x_2^3=0}1+1\\
&=(q-1)\sum_{(x_1,x_2)\in \mathbb{F}^2_q \atop a_1x_1^3+a_2x_2^3=a_3}1+(q-1)\sum_{x\in \mathbb{F}_q \atop x^3=-a_1a_2^{-1}}1+1\\
&=(q-1)N_k(a_1,a_2,a_3)+1+(q-1)\sum_{x\in \mathbb{F}_q \atop x^3=-a_1a_2^{-1}}1.
\end{align*}

If $a_1a_2^{-1}$ is cubic, the number of solutions of the equation $x^3=-a_1a_2^{-1}$ is exactly 3. Thus we have
$$M_k(a_1,a_2,a_3)=(q-1)N_k(a_1,a_2,a_3)+1+3(q-1)=(q-1)N_k(a_1,a_2,a_3)+3q-2.$$

If $a_1a_2^{-1}$ is non-cubic, the equation $x^3=-a_1a_2^{-1}$  has no solution. Thus we have
$$M_k(a_1,a_2,a_3)=(p-1)N_k(a_1,a_2,a_3)+1.$$
Hence Theorem 1.10 immediately follows from Theorem 1.9.

\begin{example} We take $\mathbb{F}_{7^2}:=\mathbb{F}_7[u]/(u^2+1)$. One can check that $u+1$ is non-cubic in $\mathbb{F}_7[u]/(u^2+1)$ and $(u+1)^{\frac{7^2-1}{3}}=4$.
If the integers $c$ and $d$ satisfying that $4\cdot 7^2=c^2+27d^2, c\equiv 1 (\bmod 3), (c,p)=1, 9d\equiv c(2(u+1)^{\frac{q-1}{3}}+1)(\bmod p)$, then $c=13, d=-1$. Thus
we have
$$N_2(1,1,u+1)=7^2-2+\frac{1}{2}(-9-13)=36$$
and
$$M_2(1,1,u+1)=49^2+\frac{1}{2}(-9-13)=1873.$$
We list the solutions of equation $x_1^3+x_2^3=u+1$ over $\mathbb{F}_7[u]/(u^2+1)$ as belove:
\begin{align*}
&(1,3u);(1,5u);(1,6u);(2,3u);(2,5u);(2,6u);(4,3u);(4,5u);(4,6u);\\
&(u+4,3u+6);(u+4,5u+3);(u+4,6u+5);(2u+1,3u+6);(2u+1,5u+3);\\
&(2u+1,6u+5);(4u+2,3u+6);(4u+2,5u+3);(4u+2,6u+5),
\end{align*}
and one can get the remaining 18 solutions by exchanging coordinates.

\end{example}

\section*{Acknowledgments}
The authors are partially supported by the National Natural Science Foundation of China (Grant
No. 11871193, 12071132) and the Natural Science Foundation of Henan Province (No. 202300410031).

\section*{References}

\end{document}